\documentclass[12pt]{amsart}
\usepackage[centertags]{amsmath}
\usepackage{amstext,amssymb,amsopn,amsthm}
\usepackage{enumerate}
\usepackage{dsfont}
\usepackage[margin=1.2in]{geometry}
\usepackage[english]{babel}
\selectlanguage{english}
\usepackage{color}

\usepackage{mathtools}

\newcommand\fd{\mathrel{\stackrel{\makebox[0pt]{\mbox{\normalfont\tiny fd}}}{=}}}

\newcommand{\tl}[1]{\textcolor{black}{#1}}

\DeclareMathOperator{\dimH}{dim_H}

\DeclareMathOperator{\Ker}{Ker}

\newcommand{\RR}{\mathbb{R}}
\newcommand{\NN}{\mathbb{N}}

\newcommand{\EE}{\mathbb{E}}

\newtheorem{theorem}{Theorem}
\newtheorem{lemma}[theorem]{Lemma}
\newtheorem{corollary}[theorem]{Corollary}
\newtheorem{proposition}[theorem]{Proposition}

\theoremstyle{definition}

\newtheorem{remark}[theorem]{Remark}

\newenvironment{proof1}{\par\noindent {\textit{Proof of Theorem \ref{th:HdmMultPoints}.}}}
{\begin{flushright} \vspace*{-6mm}\mbox{$\Box$} \end{flushright}}

\newenvironment{proof2a}{\par\noindent {\textit{Proof of Theorem \ref{th:ExMultPoints}, Case (A.1).}}}
{\begin{flushright} \vspace*{-6mm}\mbox{$\Box$} \end{flushright}}

\newenvironment{proof2b}{\par\noindent {\textit{Proof of Theorem \ref{th:ExMultPoints}, Case (A.2).}}}
{\begin{flushright} \vspace*{-6mm}\mbox{$\Box$} \end{flushright}}

\title{Multiple points of operator semistable L\'evy processes}

\subjclass[2010]{60J25, 60J30, 60G51, 60G17}
\keywords{Multiple points, Hausdorff dimension, Operator semistable process, L\'evy process}

\author[T. Luks]{Tomasz Luks}
\address{Institut f\"ur Mathematik, Universit\"at Paderborn, Warburger Strasse 100, D-33098 Paderborn, Germany}
\email{tluks@math.uni-paderborn.de}

\author[Y. Xiao]{Yimin Xiao}
\address{Department of Statistics and Probability, Michigan State University\\
619 Red Cedar Road, C413 Wells Hall,
East Lansing MI 48824-1027, USA}
\email{xiaoyimi@stt.msu.edu}
\thanks{Research of Y. Xiao was partially
supported by the NSF Grants DMS-1612885 and DMS-1607089.}

\begin{document}

\begin{abstract}
We determine the Hausdorff dimension of the set of $k$-multiple points for a symmetric operator semistable 
L\'evy process $X=\{X(t), t\in\RR_+\}$ in terms of the eigenvalues of its stability exponent. We also give a 
necessary and sufficient condition for the existence of $k$-multiple points. Our results extend to all $k\geq2$ 
the recent work \cite{LX}, where the set of double points $(k = 2)$ was studied in the symmetric operator stable 
case.
\end{abstract}

\maketitle

\section{Introduction and statement of the results}\label{sec1}

The questions on the existence of multiple points (or intersections) and the Hausdorff dimension of the set of multiple points of stochastic processes
have been of considerable interest for many years. The problem was originally studied for Brownian motion by Dvoretzky, Erd\"os, Kakutani and 
Taylor \cite{DEK1,DEK2,DEKT}. Their results were later extended to more general L\'evy processes using various techniques, see 
\cite{Dy,E,Frist,H,He,K03,KX1,KX3,LRS,LX,MX,S3,T,X04} and the references therein. Let $X=\left\{X(t),t\in\RR_+\right\}$ be a stochastic process 
with values in $\RR^d$, $d\geq1$, and let  $k\geq2$ be an integer. A point $x\in\RR^d$ is called a {\it $k$-multiple point} of $X$ if there exist 
$k$ distinct times $t_1,\ldots,\, t_k\in\RR_+$ such that
$$
X(t_1)= \ldots =X(t_k)=x.
$$
If $k=2$, then $x$ is also called a {\it double point} of $X$. We denote by $M_k$ the set of $k$-multiple points of $X$. The objective of this 
paper is to study the set $M_k$ for symmetric operator semistable L\'evy processes and to extend to all $k\geq2$ the recent results of \cite{LX}, 
where the set of double points was investigated when $X$ is symmetric operator stable. Recall from \cite{MS,S99} that a L\'evy process $X$ 
is called operator semistable if the distribution $\nu$ of $X(1)$ is full (i.e. not supported on any lower dimensional hyperplane) and there 
exists a linear operator $B$ on $\RR^d$ such that 
\begin{equation}\label{eq:OpSem}
\displaystyle\nu^c =c^B\nu\quad\text{ for some }\, c>1,
\end{equation}
where $\nu^c$ denotes the $c$-fold convolution power of the infinitely divisible law $\nu$ and $c^B\nu(dx)=\nu(c^{-B}dx)$ is the 
image measure of $\nu$ under the linear operator $c^B$. The operator $B$ is called a {\it stability exponent} of $X$. We refer to 
\cite{C,J,LR,Lu} and to the monograph \cite{MS} for more comprehensive information on operator semistable laws. As a consequence 
of \eqref{eq:OpSem}, an operator semistable L\'evy process $X$ is also {\it operator semi-selfsimilar}, i.e., for the constant $c>1$ in 
(\ref{eq:OpSem}),
\begin{equation}\label{eq:OpSelf}
\left\{X(ct)\right\}_{t\geq0}\fd\left\{c^BX(t)\right\}_{t\geq0},
\end{equation}
where $\fd$ denotes equality of all finite-dimensional distributions of the process. Operator semi-selfsimilar processes 
constitute a much broader class than that of semi-selfsimilar processes, see \cite{MaSa} for more details. If \eqref{eq:OpSem} 
(resp. \eqref{eq:OpSelf}) holds for all $c > 0$, the L\'evy process $X$ is called {\it operator stable} (resp. {\it operator self-similar}).

To formulate our main results, we factor the minimal polynomial of the stability exponent $B$ into $q_1(x)\cdots q_p(x)$, where 
all roots of $q_i(x)$ have real parts $a_i$ and $a_i<a_j$ for $i<j$. Define $V_i=\Ker(q_i(B))$ and $d_i=\dim(V_i)$. Then $d_1+ 
\cdots +d_p=d$ and $V_1\oplus\cdots\oplus V_p$ is a direct sum decomposition of $\RR^d$ into $B$-invariant subspaces. We 
may write $B=B_1\oplus\cdots\oplus B_p$, where $B_i\colon V_i\to V_i$ and every eigenvalue of $B_i$ has real part equal to 
$a_i$. For $j=1,...,d$ and $l=1,...,p$, denote $\alpha_j=a_l^{-1}$ whenever $\sum_{i=0}^{l-1}d_i<j \leq\sum_{i=0}^{l}d_i$, where 
$d_0:=0$. We then have $\alpha_1\geq...\geq\alpha_d$, and note that $0<\alpha_j\leq2$ in view of \cite[Theorem 7.2.1]{MS}. 

Our first theorem provides an explicit formula for the Hausdorff dimension of $M_k$ in $\RR^2$ in terms of the indices $\alpha_j$.

\begin{theorem}\label{th:HdmMultPoints}
Let $X =\left\{X(t),t\in\RR_+\right\}$ be a symmetric operator semistable L\'evy process
in $\RR^2$ with exponent $B$ and let $M_k$ be the set of $k$-multiple points of $X$. Then for all $k\geq2$ we have almost surely
$$
\dimH M_k=\min\left\{\alpha_1\left(k-(k-1)\left(\alpha^{-1}_1+\alpha^{-1}_2\right)\right), \,
2-k\alpha_2\left(\alpha^{-1}_1+\alpha^{-1}_2-1\right)\right\},
$$
where a negative dimension means that $M_k=\emptyset$ \tl{almost surely}.
\end{theorem}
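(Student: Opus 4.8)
The plan is to establish matching upper and lower bounds for $\dimH M_k$. I would begin with the standard reduction, via the Markov property, obtained by cutting the time axis at points strictly between consecutive visiting times $t_1<\cdots<t_k$: this identifies $\dimH M_k$ with the Hausdorff dimension of the intersection $\bigcap_{j=1}^k X^j(\RR_+)$ of the ranges of $k$ independent copies $X^1,\dots,X^k$ of $X$ (the random translations and the passage from finite to infinite time horizons do not affect dimension). The next ingredient is a pair of two-sided estimates, one for the transition density $p_t$ of $X$ and one for the probability that a single range $X^j(\RR_+)$ meets a small box. The engine for both is the operator semi-selfsimilarity \eqref{eq:OpSelf}. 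Because \eqref{eq:OpSelf} gives exact scaling only along the geometric sequence $c^n$, I expect the clean power laws available in the operator stable case of \cite{LX} to be replaced by estimates that are uniform for $t$ ranging over one scaling period $[c^n,c^{n+1}]$ and then patched together, at the cost of bounded multiplicative factors. In coordinates adapted to the decomposition $V_1\oplus V_2$, the natural box at time scale $r$ has spatial dimensions comparable to $r^{\alpha_1^{-1}}\times r^{\alpha_2^{-1}}$, which is what ties the two indices $\alpha_1,\alpha_2$ to the geometry of the covering.

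For the upper bound I would run a first-moment covering argument. Partitioning the relevant time region into products of $k$ short intervals, and using independence of the $k$ pieces, the probability that all of them lie within one common box factorizes into a product of $k$ single-range hitting probabilities; summing over boxes and over time configurations yields an estimate for the expected number of boxes meeting $\bigcap_j X^j(\RR_+)$. The minimum of two expressions is the signature of the anisotropy: the covering may be organized either around the slowly spreading coordinate (index $\alpha_1$) or the rapidly spreading one (index $\alpha_2$), and the two choices produce the competing exponents $\alpha_1\bigl(k-(k-1)(\alpha_1^{-1}+\alpha_2^{-1})\bigr)$ and $2-k\alpha_2(\alpha_1^{-1}+\alpha_2^{-1}-1)$. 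Optimizing the covering selects whichever exponent is smaller, which is exactly the stated $\min$; in the isotropic case $\alpha_1=\alpha_2$ the two collapse to the classical codimension formula $2-k(2-\alpha)$, a useful consistency check.

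For the lower bound I would use the energy, or capacity, method. On the event $\{M_k\neq\emptyset\}$ I would build a random measure $\mu$ supported on $M_k$ --- morally a $k$-fold intersection local time assembled from the occupation measures of the independent pieces --- and show that for every $\gamma$ strictly below the claimed value its expected $\gamma$-energy $\EE\iint\|x-y\|^{-\gamma}\,\mu(dx)\,\mu(dy)$ is finite. Frostman's lemma then gives $\dimH M_k\geq\gamma$, and letting $\gamma$ approach the claimed value finishes the bound. The energy integral naturally breaks up according to which coordinate direction separates the pair $(x,y)$, and the two resulting pieces converge precisely when $\gamma$ lies below the first and the second exponent respectively; this is where the $\min$ reappears and where the two-sided density estimates from the first step are used at full strength.

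The step I expect to be the main obstacle is carrying the anisotropy and the mere semi-selfsimilarity through the $k$-fold product structure simultaneously. Each estimate that is a clean power of $r$ in the operator stable setting must be upgraded to a comparison holding uniformly across a full scaling period $[c^n,c^{n+1}]$, and the two indices must be tracked so that the correct direction dominates in each scaling regime and for each of the $k$ factors. Controlling the cross terms in the energy computation (and the corresponding second-moment bounds) for general $k\geq2$, rather than only for double points, is the place where the combinatorics and the competition between the two exponents become most delicate.
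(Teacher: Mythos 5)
Your plan is the classical geometric route (reduce $M_k$ to intersections of $k$ independent ranges, first-moment covering for the upper bound, intersection local time plus Frostman for the lower bound), and it is genuinely different from what the paper does; but as it stands it has real gaps, concentrated exactly where the paper puts its effort. The paper never touches transition densities or hitting probabilities: it quotes the general Fourier-analytic dimension formula \eqref{eq:HdmGeneral} from \cite{LX}, combines it with the two-sided exponent estimate \eqref{est:Levyexp} from \cite{KMX} (which also yields the absolute continuity needed to apply \eqref{eq:HdmGeneral}), and thereby reduces the whole theorem, via \eqref{eq:DimTriple} and \eqref{eq:2series}, to a single analytic statement: the exact asymptotics of the $k$-fold anisotropic integral over $A_k(q,r)$ (Proposition~\ref{th:Ibeta}), followed by the series evaluation of Lemma~\ref{lem:series}, which is where the minimum of the two exponents actually emerges. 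The inputs you take for granted are not available off the shelf in the semistable setting: two-sided anisotropic bounds for $p_t$, and estimates for the probability that a single range meets an $r^{1/\alpha_1}\times r^{1/\alpha_2}$ box, are themselves substantial results, and in the boundary cases they fail to be clean power laws --- for $\alpha_1=\alpha_2=2$ planar hitting probabilities are only logarithmic (the paper handles this case by a separate limiting argument with $\widetilde\alpha_2=\alpha_2-\varepsilon$), and already for $k=2$, $\alpha_1=2$, $\alpha_2=1$ logarithmic factors appear in \eqref{DPintegral}. The Fourier route through \eqref{eq:HdmGeneral} is designed precisely to bypass all of this.

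The decisive quantitative step in your proposal --- that the covering sums and the energy integrals converge exactly when the exponent lies below the stated minimum --- is asserted (``the two resulting pieces converge precisely when \dots'') rather than proved, and it is not routine: for general $k$ this is the content of the induction in Proposition~\ref{th:Ibeta}, where the constraint $q-1\leq|\sum_{i=1}^k x_{i1}|<q$ must be split into four regimes per coordinate and the sixteen sets $A_{k'}^{i,j}(q,r)$ estimated separately, with $k-(k-1)(1/\alpha_1+1/\alpha_2)>0$ emerging as the exact dividing line between finiteness and divergence; your consistency check at $\alpha_1=\alpha_2$ makes the answer plausible but does not substitute for this computation, which is essentially the entire length of the paper's proof. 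Two further points need more than you give them: the identification $\dimH M_k=\dimH\bigcap_{j=1}^k X^j(\RR_+)$ almost surely requires a countable decomposition over rational time intervals and an argument that the dimension does not drop on the event of intersection (the paper sidesteps this by citing \cite[Theorem 1]{LX}, which is stated for $M_k$ itself); and the second-moment estimates for the $k$-fold intersection local time, which you flag as the main obstacle, are exactly the anisotropic integral asymptotics above in disguise. So the skeleton you describe is sound and would, if completed, amount to reproving the general machinery of \cite{LX} together with Proposition~\ref{th:Ibeta}; what is missing is precisely that core.
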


 Theorem~\ref{th:HdmMultPoints} is more general than \cite[Theorem 1]{S3}, where $B$ is assumed to be a diagonal matrix 
with entries on the diagonal $\alpha_j \in (1, 2)$ ($1 \le j \le d$). Note also that the dimension formula for double points in \tl{$\RR^2$ 
and} $\RR^3$ is given in \cite[Corollary 3.8]{KMX}. Since $M_3=\emptyset$ a.s. for $d=3$ and $M_2=\emptyset$ a.s. for $d\geq4$ 
(see the beginning of Section~\ref{sec4} for the proof), Theorem~\ref{th:HdmMultPoints} completes the solution of the Hausdorff 
dimension problem for $M_k$ in the setting of symmetric operator semistable L\'evy processes.

Our second theorem characterizes the existence of multiple points in terms of $\alpha_1$ and $\alpha_2$. According to the Jordan 
decomposition for $d=2$, the stability exponent of $X$ satisfies $B=PDP^{-1}$ for some real invertible matrix $P$ and a matrix $D$ 
which can have the following forms:
\tl{\begin{enumerate}
\item[(A.1)] $\left(
	\begin{matrix}
		1/\alpha_1 & 0 \\
		0 & 1/\alpha_2  \\
	\end{matrix}
	\right) $ \ or \ $\left(
	\begin{matrix}
		1/\alpha_1 & -b \\
		b & 1/\alpha_2  \\
	\end{matrix}
	\right)\ \hbox{ with }\, \alpha_1=\alpha_2;$
\item[(A.2)] $\left(
	\begin{matrix}
		1/\alpha & 0 \\
		1 & 1/\alpha  \\
	\end{matrix}
	\right)$.
\end{enumerate}
Clearly, in the case (A.2) we have $\alpha:=\alpha_1=\alpha_2$.
}

\begin{theorem}\label{th:ExMultPoints}
Let $X =\left\{X(t),t\in\RR_+\right\}$ be a symmetric operator semistable L\'evy process
in $\RR^d$ and let $k\geq3$. The existence of $k$-multiple points of $X$ for $d=2$ 
depends on the cases (A.1) and (A.2) as follows:
\begin{itemize}
\item In Case (A.1), $M_k \ne \emptyset$ a.s.  if and only if $k-(k-1)(\alpha^{-1}_1+\alpha^{-1}_2)>0$.
\item In Case (A.2), $M_k \ne \emptyset$ a.s.  if and only if $\alpha\geq 2(k-1)/k$.
\end{itemize}
Furthermore, $M_k=\emptyset$ a.s. for $d\geq3$.
\end{theorem}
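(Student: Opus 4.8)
The plan is to characterise the existence of $k$-multiple points through an energy/capacity criterion for the intersection of independent copies of $X$, and to reduce the whole statement to the analysis of a single borderline ("critical") integral whose behaviour is dictated by the fine structure of the $1$-potential density of $X$ near the origin. I would first dispose of $d\ge 3$: since a $k$-multiple point is in particular a $3$-multiple point for $k\ge 3$, we have $M_k\subseteq M_3$, and $M_3\subseteq M_2$; using the facts recorded in Section~\ref{sec1} that $M_3=\emptyset$ a.s. for $d=3$ and $M_2=\emptyset$ a.s. for $d\ge 4$, we obtain $M_k\subseteq M_3=\emptyset$ a.s. when $d=3$ and $M_k\subseteq M_2=\emptyset$ a.s. when $d\ge 4$, settling the last assertion. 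For $d=2$, I would relate the existence of $k$-multiple points to the nonemptiness of $\bigcap_{i=1}^k X^i(\RR_+)$ for $k$ independent copies $X^1,\dots,X^k$ of $X$, and invoke the standard potential-theoretic dichotomy: this intersection is nonempty with positive probability if and only if some probability measure $\mu$ on a ball has finite energy
\[
\int\!\!\int u(x-y)^k\,\mu(dx)\,\mu(dy)<\infty ,
\]
where $u$ is the $1$-potential density of $X$ (which exists and is continuous off the origin in the present setting). A Blumenthal-type $0$–$1$ law together with the operator semi-selfsimilarity \eqref{eq:OpSelf} then upgrades positive probability to an almost sure statement.

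The key input is the size of $u$ near the origin. From \eqref{eq:OpSelf} one obtains, along the discrete scaling $c=c_0^n$, the quasi-homogeneity $u(c^Bx)\asymp c^{-(\gamma-1)}u(x)$ with $\gamma=\operatorname{tr}B=\alpha_1^{-1}+\alpha_2^{-1}$, so that near the origin $u(x)\asymp\|x\|_B^{-(\gamma-1)}$ up to bounded periodic fluctuations, $\|\cdot\|_B$ being the homogeneous gauge of $B$. Writing the energy in $B$-polar coordinates, its finiteness is governed by $\int_0^{1}\rho^{-(k-1)(\gamma-1)}\,d\rho$, i.e.\ by the sign of $k-(k-1)(\alpha_1^{-1}+\alpha_2^{-1})$. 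When this quantity is positive the integral converges, a finite-energy measure exists and $M_k\ne\emptyset$ a.s.; when it is negative the kernel is too singular, every measure has infinite energy, and a first-moment (covering) bound gives $M_k=\emptyset$ a.s. All non-critical cases thus follow at once from the sign of $\dimH M_k$ supplied by Theorem~\ref{th:HdmMultPoints}.

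Everything therefore concentrates on the critical case $k-(k-1)(\alpha_1^{-1}+\alpha_2^{-1})=0$, where $(k-1)(\gamma-1)=1$ and the radial integral $\int_0^1\rho^{-1}\,d\rho$ diverges logarithmically; this is the heart of the argument and is where the two Jordan types separate. In Case (A.1) the exponent $B$ is normal (diagonal, possibly with a conformal rotation), the gauge balls are undistorted, and $u(x)\asymp\|x\|_B^{-(\gamma-1)}$ carries no compensating logarithmic factor; the critical energy is infinite for every $\mu$, the corresponding capacity vanishes, and a covering argument yields $M_k=\emptyset$ a.s., which is why the condition for (A.1) is strict. In Case (A.2) the Jordan block contributes the shear $c^B=c^{1/\alpha}\left(\begin{smallmatrix}1&0\\ \log c&1\end{smallmatrix}\right)$, whose logarithmic distortion of the gauge produces a genuine logarithmic gain in the potential along the exceptional direction; at the critical exponent this converts the divergent $\int_0^1\rho^{-1}\,d\rho$ into a convergent one, so a probability measure of finite energy can be constructed, the capacity is positive, and $M_k\ne\emptyset$ a.s.\ even at $\alpha=2(k-1)/k$. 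This is precisely the difference between the strict inequality in (A.1) and the inequality $\alpha\ge 2(k-1)/k$ in (A.2).

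The main obstacle is exactly this critical analysis: one must quantify the logarithmic correction to $u$ created by the Jordan block in (A.2) and prove it is just strong enough to render the borderline energy finite (so that a finite-energy measure exists), while simultaneously showing that no such correction is present in (A.1) (so that the capacity genuinely vanishes). The difficulty is compounded by the semistable setting, in which \eqref{eq:OpSelf} holds only along the sequence $c_0^n$, so that every estimate carries bounded but nontrivial periodic fluctuations which must be controlled without spoiling the delicate logarithmic gain.
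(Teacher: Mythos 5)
Your reduction of $d\ge3$ to the facts $M_3=\emptyset$ a.s.\ ($d=3$) and $M_2=\emptyset$ a.s.\ ($d\ge4$) via $M_k\subseteq M_3\subseteq M_2$ matches the paper, and your observation that Theorem~\ref{th:HdmMultPoints} already settles every non-critical case in $d=2$ (positive dimension forces $M_k\ne\emptyset$; negative ``dimension'' means $M_k=\emptyset$ by the convention of that theorem) is a legitimate shortcut the paper does not even use. But this means the entire content of Theorem~\ref{th:ExMultPoints} beyond Theorem~\ref{th:HdmMultPoints} is the critical case $k-(k-1)(\alpha_1^{-1}+\alpha_2^{-1})=0$, and there your proposal stops at a description rather than a proof. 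The key input you posit --- two-sided small-$x$ asymptotics of the ($1$-)potential density of the form $u(x)\asymp\|x\|_B^{-(\gamma-1)}$ with, in Case (A.2), a quantified logarithmic correction --- is itself an unproven and genuinely delicate claim: it requires boundedness and positivity of $u$ on a gauge sphere, control of the semistable periodic fluctuations (scaling holds only along $c_0^n$, as you note), and in (A.2) an \emph{exact} log exponent, none of which you supply. The paper never estimates $u$ at all; it stays on the Fourier side, where \cite[(2.1),(2.5)--(2.7)]{KMX} give $\Psi(x)\asymp|x_1|^{\alpha}+|x_2|^{\alpha}(\log\|x\|)^{\alpha}$ in Case (A.2), feeds this into the criterion \eqref{eq:LevExpCond}, and proves by induction on $k$ the sharp estimate of Proposition~\ref{th:LogEst}, whose right-hand side $[q\vee(r\log r)]^{-(2-k(2-\alpha))}[\log(q\vee r)]^{-(k-1)}$ carries precisely the log power that decides the borderline.

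Your radial reduction is where the sketch would actually fail as a proof. The logarithmic gain in (A.2) attaches only to the second coordinate (the kernel behaves like $(|x_1|+|x_2|\log\|x\|)^{-\alpha}$, not like $(\|x\|_B\log\|x\|_B)^{-(\gamma-1)}$ in all directions), so whether it rescues the critical energy cannot be read off a one-dimensional integral $\int_0^1\rho^{-1}\,d\rho$: in the paper the borderline series reduces to $\sum_n n^{-1}(\log n)^{-(k-1)}$-type behavior at $\alpha=2(k-1)/k$, i.e.\ convergence holds only because the log exponent produced by the $(k-1)$-fold convolution is $k-1\ge 2$ --- a fact invisible to any argument that does not track the joint $(q,r)$ (equivalently angular) dependence. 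Symmetrically, in Case (A.1) your assertion that ``no compensating logarithmic factor'' occurs is not safe in the edge subcases: at criticality with $k=3$ one can have $\alpha_1=2$, $\alpha_2=1$, exactly the configuration where the paper warns that extra $\log$ factors appear in the upper bound of \eqref{DPintegral}; the paper's emptiness proof survives because divergence only needs the lower bound of \eqref{TRintegral}, but your energy argument would need the matching upper bound on $u$ to conclude that \emph{every} measure has infinite energy, plus the converse direction of the capacity criterion (cf.\ \cite{E,FS,LRS}) whose hypotheses you have not verified. In short: the strategy is a coherent real-space alternative, but the heart of the theorem --- the critical-case computation that the paper performs in Propositions~\ref{th:Ibeta} and~\ref{th:LogEst} --- is missing, and the potential-density asymptotics on which you would base it are asserted, not established.
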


The existence of double points is characterized separately in Corollary~\ref{cor:ExDouble}. 
Note that in the case (A.2) and $k\geq3$, the set $M_k$ is nonempty almost surely if $\alpha=2(k-1)/k$ 
although its Hausdorff dimension is zero. The same effect appears for double points in $\RR^3$ in 
the non-diagonalizable case (B.3) and $\alpha=3/2$ (see the end of Section~\ref{sec4} for details). 

\tl{Even though in this paper we focus on symmetric operator semistable L\'evy process in $\RR^d$ 
with $d \ge 2$ so that its exponent $B$ is a matrix, the problems on existence of $k$-multiple points
and the Hausdorff dimension of $M_k$ are meaningful for the case of $d=1$ as well. In the special case 
when $X= \left\{X(t),t\in\RR_+\right\}$ is a symmetric stable L\'evy process in $\RR$, these problems 
were solved by Taylor \cite{T}. For a symmetric semistable L\'evy process in $\RR$, we can 
apply the general tools in \cite{KX4, KX5,LRS,LX}  (cf. (\ref{eq:LevExpCond}) and (\ref{eq:HdmGeneral}) 
below) to solve these problems. We provide more details in Remark \ref{Re:last}. 
}
 
This paper raises several natural questions on multiple points of operator semistable L\'evy processes. 
For example, it would be interesting to remove the ``symmetry'' assumption; or to find an exact 
Hausdorff measure function for $M_k$. Another interesting problem is to find the packing dimension 
of $M_k$. 

Our paper is organized as follows. In Section~\ref{sec2} we give basic definitions and recall some useful 
facts.  In Section~\ref{sec3} we prove Theorem~\ref{th:HdmMultPoints}. In Section~\ref{sec4} we deal 
with the existence problem for multiple points and prove Theorem~\ref{th:ExMultPoints}.

\section{Preliminaries}\label{sec2}

A stochastic process $X=\left\{X(t),t\in\RR_+\right\}$ with values in $\RR^d$ is called a L\'evy process if $X$ has 
stationary and independent increments, $X(0)=0$ a.s. and $t\mapsto X(t)$ is continuous in probability. We refer 
to the books \cite{B96,S99} for systematic accounts on L\'evy processes. It is known that the finite-dimensional 
distributions of $X$ are determined by the characteristic function
$$
\EE[ e^{i \langle \xi,\, X(t)\rangle }]= e^{-t\Psi(\xi)}, \quad
\forall\, t\geq0,
$$
where $\Psi\colon\RR^d\mapsto\mathbb {C}$ is given by the
L\'evy-Khintchine formula and is called the {\it characteristic or L\'evy exponent} of $X$.

A L\'evy process $X$ is said to be {\it symmetric} if $-X$ and $X$ have the same finite-dimensional distributions.
In such a case,  $\Psi (\xi) \ge 0$ for all $\xi \in \RR^d$. Using the terminology in \cite{KX4,KX5}, we say that $X$ 
is {\it absolutely continuous}, if for all $t>0$, the function $\xi\mapsto e^{-t\Psi(\xi)}$ is in $L^1(\RR^d)$. 
In this case, the Fourier inversion formula implies that the density function of $X(t)$ is bounded and continuous.

 It has been recently proved in \cite[Theorem 1]{LX} that the Hausdorff dimension of $M_k$ for a symmetric, 
 absolutely continuous L\'evy process with characteristic exponent $\Psi$ is given by
\begin{equation}\label{eq:HdmGeneral}
\dimH M_k= d-\inf\left\{\beta\in(0,d]:\int_{\RR^{kd}}\left[\frac{1}{1+\|\sum_{l=1}^k\xi_l\|^{\beta}}
\prod_{j=1}^k\frac{1}{1+\Psi(\xi_j)} \right]d\overline{\xi}<\infty \right\}
\end{equation}
almost surely, where $\overline{\xi}=(\xi_1,...,\xi_k)$ for $\xi_j\in\RR^d$ \tl{and $\|\cdot\|$ denotes the usual Euclidean 
norm in $\RR^d$}. Here we use the convention $\inf\emptyset=d$. 

Furthermore, in the case when $X$ is a symmetric operator semistable L\'evy process, \cite[Corollary 2.2]{KMX} gives 
the following estimate for its characteristic exponent $\Psi$: for every $\varepsilon>0$, there exists a constant $\tau>1$ 
such that for all $\xi\in\RR^d$ with $\|\xi\|\geq\tau$, we have
\begin{equation}\label{est:Levyexp}
\frac{K^{-1}}{\|\xi\|^{\varepsilon}\sum_{j=1}^d|\xi_j|^{\alpha_j}}\leq
\frac{1}{1+\Psi(\xi)}\leq\frac{\tl{K}}{\sum_{j=1}^d|\xi_j|^{\alpha_j}},
\end{equation}
where $K\geq1$ is a constant which depends on $\varepsilon$ and $\tau$ only and, as defined in the Introduction,  
\tl{$\alpha_1\geq ... \geq\alpha_d >0$ are the reciprocals of the} 
real parts of the eigenvalues of the stability exponent $B$. The estimate \eqref{est:Levyexp} 
extends \cite[Theorem 4.2]{MX}, where the result was obtained for operator stable L\'evy processes, and 
implies that $X$ is absolutely continuous.

Throughout the rest of the paper, $C$ will denote a positive constant, whose value may change in each appearance 
and depends on the coefficients $\alpha_j$ and the multiplicity $k$ only. By the notation $f\asymp g$ we mean that 
there is a constant $C$ such that $C^{-1}g\leq f\leq Cg$. Analogously we define $\gtrsim$ and $\lesssim$.

\section{Hausdorff dimension of multiple points in $\RR^2$}\label{sec3}

Our goal in this section is to  prove Theorem~\ref{th:HdmMultPoints} by extending the method
of \cite{LX} in order to deal with the set $M_k$ for $k \ge 3$. To this end we fix $2\geq\alpha_1\geq\alpha_2>0$ 
and let $X =\left\{X(t),t\in\RR_+\right\}$ be a symmetric operator semistable L\'evy process in $\RR^2$ with 
stability exponent $B$ whose eigenvalues have real parts $\alpha^{-1}_1,\alpha^{-1}_2$, as explained in the Introduction. 
Denote
$$
A_k:=\left\{(x_1,...,x_k)\in\RR^{2k}:\tl{\|x_i\|}>1, i=1,...,k\right\},
$$
where $x_i=(x_{i1},x_{i2})\in\RR^2$ for $i=1,...,k$, and for $\beta>0$ let
\begin{align*}
I_{\beta}:=\idotsint_{A_k} \frac{1}{1+|\sum_{i=1}^kx_{i1}|^{\beta}+|\sum_{i=1}^kx_{i2}|^{\beta}}
\prod_{i=1}^k\frac{dx_1 \dots dx_k}{|x_{i1}|^{\alpha_1}+|x_{i2}|^{\alpha_2}}.
\end{align*}
It follows from  \eqref{eq:HdmGeneral},  \eqref{est:Levyexp} and a slight modification of 
\cite[Lemma 2]{LX} that   
\begin{equation}\label{eq:DimTriple}
\dimH M_k=2-\inf\{\beta\in(0,2):I_\beta<\infty\}
\end{equation}
almost surely. For $q,r\geq1$ let
\begin{align*}
A_k(q,r):=&\biggl\{(x_1,...,x_k)\in\RR^{2k}:\tl{\|x_i\|}>1,i=1,...,k,\biggl.\\
&\qquad \biggl. q-1\leq \Big|\sum_{i=1}^kx_{i1} \Big|<q, \ r-1\leq \Big|\sum_{i=1}^kx_{i2} \Big|<r\biggl\}.
\end{align*}
We have
\begin{equation}\label{eq:2series}
I_{\beta}<\infty\ \ \text{ if and only if }\ \ \sum_{m,n\in\NN}\frac{1}{m^{\beta}+n^{\beta}}\idotsint_{A_k(m,n)}
\prod_{i=1}^k\frac{dx_1 \dots dx_k}{|x_{i1}|^{\alpha_1}+|x_{i2}|^{\alpha_2}}<\infty.
\end{equation}
In \cite[Proof of Theorem 4]{LX} it has been shown that, excepting the cases $\alpha_1=\alpha_2=2$ and $\alpha_1=2$, $\alpha_2=1$, 
one has
\begin{equation}\label{DPintegral}
\iint_{A_2(m,n)}\frac{dx_1dx_2}{(|x_{11}|^{\alpha_1}+|x_{12}|^{\alpha_2})(|x_{21}|^{\alpha_1}+|x_{22}|^{\alpha_2})}
\asymp(m^{\alpha_1}+n^{\alpha_2})^{1/\alpha_1+1/\alpha_2-2}
\end{equation}
for any $m,n\in\NN$ provided $2-1/\alpha_1-1/\alpha_2>0$ and the integral above is infinite otherwise. When $\alpha_1=\alpha_2=2$ 
or $\alpha_1=2$ and $\alpha_2=1$, an extra factor $\log m$ or $\log n$ may appear in the upper bound of \eqref{DPintegral}, 
the rest of the statement remains the same. Our next result extends \eqref{DPintegral} to all $k\geq2$, which is essential for proving 
Theorem \ref{th:HdmMultPoints}.

\begin{proposition}\label{th:Ibeta}
Assume $\alpha_2<2$ and let $k\geq2$ be an integer. For $k=2=\alpha_1$ assume in addition $\alpha_2\neq1$. 
Then for any real numbers $q,r\geq1$ we have
\begin{align}\label{TRintegral}
\idotsint_{A_k(q,r)}\prod_{i=1}^k\frac{1}{|x_{i1}|^{\alpha_1}+|x_{i2}|^{\alpha_2}}\,dx_1 \dots dx_k\asymp(q^{\alpha_1}
+r^{\alpha_2})^{(k-1)(1/\alpha_1+1/\alpha_2)-k}
\end{align}
provided $k-(k-1)(1/\alpha_1+1/\alpha_2)>0$, and the integral is infinite otherwise.
\end{proposition}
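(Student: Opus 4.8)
The plan is to recognize the integral as a shell-average of a $k$-fold convolution and to exploit the exact anisotropic homogeneity of the kernel. Write $g(u)=\big(|u_1|^{\alpha_1}+|u_2|^{\alpha_2}\big)^{-1}$ for $u=(u_1,u_2)\in\RR^2$, set $N(u)=|u_1|^{\alpha_1}+|u_2|^{\alpha_2}$ and $\gamma=1/\alpha_1+1/\alpha_2$, and let $\delta_\lambda(u_1,u_2)=(\lambda^{1/\alpha_1}u_1,\lambda^{1/\alpha_2}u_2)$ be the associated anisotropic dilation, so that $N(\delta_\lambda u)=\lambda N(u)$ and $g(\delta_\lambda u)=\lambda^{-1}g(u)$. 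Since the integrand depends only on $|x_{i1}|,|x_{i2}|$, the four sign-choices for the two constrained sums contribute equally, and Fubini gives
\begin{equation*}
\idotsint_{A_k(q,r)}\prod_{i=1}^k g(x_i)\,dx=4\int_{\substack{q-1\le w_1<q\\ r-1\le w_2<r}} P_k(w)\,dw ,
\end{equation*}
where $P_k$ is the density of $\sum_{i=1}^k x_i$ weighted by $\prod_i g(x_i)\mathbf 1_{\{\|x_i\|>1\}}$, i.e. the $k$-fold convolution of $g\,\mathbf 1_{\{\|\cdot\|>1\}}$. Thus the matter reduces to the size of $P_k$ on this thin shell.

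The central device is the cutoff-free convolution $G_k:=g^{*k}$. Substituting $u_i\mapsto\delta_\lambda u_i$ in the $2(k-1)$-dimensional convolution integral shows that $G_k$ is exactly homogeneous,
\begin{equation*}
G_k(\delta_\lambda w)=\lambda^{(k-1)\gamma-k}\,G_k(w)=\lambda^{-a_k}G_k(w),\qquad a_k:=k-(k-1)\gamma ,
\end{equation*}
so that $G_k(w)=N(w)^{-a_k}\Phi_k(w')$ with $w'=\delta_{N(w)^{-1}}w$ on the compact gauge-sphere $\{N=1\}$. Everything then reduces to showing that the angular profile $\Phi_k$ is finite and bounded away from $0$ on $\{N=1\}$; positivity and continuity of $G_k$ away from the origin give the lower bound by compactness, so the content is the finiteness.

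Finiteness of $\Phi_k$ is the main obstacle and is where the hypothesis enters. Fixing $w$ with $N(w)=1$, I would bound $G_k(w)=\int\prod_{i<k}g(u_i)\,g(w-\sum_{i<k}u_i)\,du$ by a dyadic decomposition in the anisotropic scales $N(x_i)\asymp2^{j_i}$ of the $k$ pieces $x_1,\dots,x_{k-1},x_k:=w-\sum_{i<k}x_i$. On each block $\int_{N(x_i)\asymp2^{j_i}}g(x_i)\,dx_i\asymp2^{j_i(\gamma-1)}$, so the small-scale directions $j_i\to-\infty$ are summable precisely because $\gamma>1$, which holds since $\alpha_2<2$ together with $\alpha_1\le2$; this is also what makes the integrand locally integrable near the singular loci $\{x_i=0\}$ and $\{\sum x_i=w\}$. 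For the large scales, the constraint $\sum_i x_i=w$ with $N(w)=1$ forces at least two pieces to be comparably large, and the slowest-decaying stratum is the one with all $k$ pieces at a common scale $R=2^j$, contributing $R^{(k-1)\gamma}\cdot R^{-k}=R^{-a_k}$ per block, while strata with fewer large pieces decay faster. Summing over $j\to+\infty$ converges if and only if $a_k=k-(k-1)\gamma>0$, i.e. exactly when $k-(k-1)(1/\alpha_1+1/\alpha_2)>0$; when $a_k\le0$ the block estimate forces $G_k\equiv+\infty$, and since the cutoff $\|x_i\|>1$ excludes only bounded pieces it cannot remove this divergence, which yields the ``infinite otherwise'' clause.

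Finally I would transfer the homogeneous estimate $G_k(w)\asymp N(w)^{-a_k}$ back to the cutoff integral. For the upper bound, dropping the cutoff only enlarges the region, so $P_k\le G_k$ and the integral is $\le 4\int_{\mathrm{shell}}G_k\asymp (q^{\alpha_1}+r^{\alpha_2})^{-a_k}$, using $N(w)\asymp q^{\alpha_1}+r^{\alpha_2}$ on the shell and the integrability of $G_k$ near the origin (which needs $a_k<\gamma$, again a consequence of $\gamma>1$). For the matching lower bound I keep the cutoff and estimate the removed mass: the part of $G_k(w)$ with some $\|x_i\|\le1$ is at most $\sum_{i}\int_{\|x_i\|\le1}g(x_i)G_{k-1}(w-x_i)\,dx_i\asymp N(w)^{-a_{k-1}}$, using the level-$(k-1)$ estimate $G_{k-1}(w)\asymp N(w)^{-a_{k-1}}$ (from the same homogeneity argument, or by induction from the base case \eqref{DPintegral}); since $a_{k-1}-a_k=\gamma-1>0$ this deficit is negligible compared with $N(w)^{-a_k}$ as $N(w)\to\infty$, whence $P_k(w)\gtrsim N(w)^{-a_k}$ on the shell for large $q^{\alpha_1}+r^{\alpha_2}$, while the bounded regime follows from positivity and continuity. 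The one caveat is that the dyadic sum can produce logarithmic corrections at a borderline resonance of a directional (marginal) convolution; these borderline resonances are exactly what the hypotheses $\alpha_2<2$ (which secures $\gamma>1$) and, for $k=2=\alpha_1$, $\alpha_2\neq1$ are designed to avoid.
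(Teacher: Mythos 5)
Your argument is correct in outline, and it takes a genuinely different route from the paper. The paper proves \eqref{TRintegral} by induction on $k$ with the known case \eqref{DPintegral} from \cite{LX} as base: the shell constraint is split into the four cases 1)--4) in each coordinate, and on each of the sixteen resulting regions $A_{k'}^{i,j}(q,r)$ the induction hypothesis collapses the $2k'$-fold integral to an explicit two-dimensional one, which is then estimated case by case. You instead exploit the exact anisotropic homogeneity $G_k(\delta_\lambda w)=\lambda^{-a_k}G_k(w)$ of the cutoff-free convolution and reduce everything to (i) finiteness and positivity of the profile on the compact gauge sphere and (ii) a transfer estimate between $G_k$ and the cutoff density $P_k$. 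The transfer is clean and your quantitative checks are right: the upper bound is monotone in the cutoff and needs only $a_k<\gamma$, equivalent to $\gamma>1$; the lower bound works because the removed mass is $O(N(w)^{-a_{k-1}})=o(N(w)^{-a_k})$, since $a_{k-1}-a_k=\gamma-1>0$. What this buys is conceptual clarity and uniformity in $k$ (the sixteen-case analysis disappears); what it costs is that the real work is now hidden in the profile-finiteness step. Your dyadic stratum count is the correct accounting (a stratum with $m\ge2$ comparably large pieces contributes $R^{-a_m}$, and $a_m>a_k$ for $m<k$), but to make it rigorous you should either carry out the full multiscale bookkeeping or, more cleanly, run the recursion $G_k=g*G_{k-1}$ with the three-region splitting $N(u)\ll 1$, $N(u)\asymp N(w)$, $N(u)\gg N(w)$: it converges precisely under $\gamma>1$, $a_{k-1}<\gamma$ and $a_k>0$, and it simultaneously supplies the induction on $k-1$ that your deficit estimate invokes. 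A few points still need a line each: continuity of $G_k$ away from the origin (for the compactness/positivity claim), uniformity in the bounded regime $q,r\asymp 1$, and the divergence clause, where your remark that the cutoff cannot cure the divergence should be implemented by restricting to the stratum with all $k$ pieces at scale $2^j$, the first $k-1$ free and the last pinned to a unit box by the shell, giving blocks $\asymp 2^{-ja_k}$ whose sum diverges when $a_k\le 0$.

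One caveat in your closing paragraph is slightly off: within your scheme the only resonances are $\gamma=1$ (ruled out by $\alpha_2<2$, since $\alpha_1\le 2$) and $a_m=0$ for some $m\le k$ (which already places you in the divergent branch, as $a_m\ge a_k$). The hypothesis $\alpha_2\neq 1$ when $k=2=\alpha_1$ plays no role in your argument: the paper needs it only because its induction inherits the possibly parasitic logarithmic factor in the upper bound of \eqref{DPintegral} from the method of \cite{LX}, whereas a completed version of your homogeneity argument would show that $G_2\asymp N^{-a_2}$ holds there as well, i.e., you would prove slightly more than the proposition states.
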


\begin{proof}
Notice that the integrand in \eqref{TRintegral} is symmetric in $x_{i1}$ and $x_{i2}$ for $i=1,...,k$. Hence, without loss of generality, 
we  will consider the integral  in \eqref{TRintegral} over the set
\begin{align*}
\widetilde A_k(q,r):=&\biggl\{(x_1,...,x_k)\in\RR^{2k}:x_{k1}, x_{k2}\geq1, |x_{i1}|, |x_{i2}|\geq 1,i=1,...,k-1,\Bigl.\\
&\qquad \Bigl.q-1\leq \Big|\sum_{i=1}^kx_{i1} \Big|<q, \ r-1\leq \Big|\sum_{i=1}^kx_{i2} \Big|<r\biggl\}.
\end{align*}
We then have $q-1\leq|\sum_{i=1}^kx_{i1}|<q$ if and only if
$$
-q-x_{k1}\leq \sum_{i=1}^{k-1}x_{i1}\leq -q-x_{k1}+1\quad\text{ or }\quad q-x_{k1}-1\leq \sum_{i=1}^{k-1}x_{i1}\leq q-x_{k1},
$$
and $r-1\leq|\sum_{i=1}^kx_{i2}|<r$ if and only if
$$
-r-x_{k2}\leq \sum_{i=1}^{k-1}x_{i2}\leq -r-x_{k2}+1\quad\text{ or }\quad r-x_{k2}-1\leq \sum_{i=1}^{k-1}x_{i2}\leq r-x_{k2}.
$$
Consider the following four cases:
\begin{enumerate}
	\item[1).] $q+x_{k1}-1\leq |\sum_{i=1}^{k-1}x_{i1}|\leq q+x_{k1}$ and $x_{k1}\geq 1$.
	\item[2).] $q-x_{k1}-1\leq |\sum_{i=1}^{k-1}x_{i1}|\leq q-x_{k1}$ and $1\leq x_{k1}\leq q-1$.
	\item[3).] $|\sum_{i=1}^{k-1}x_{i1}|\leq 2$ and $q-1\leq x_{k1}\leq q+1$.
	\item[4).] $x_{k1}-q\leq |\sum_{i=1}^{k-1}x_{i1}|\leq x_{k1}-q+1$ and $x_{k1}\geq q+1$.
\end{enumerate}
It can be seen that  the condition $q-1\leq|\sum_{i=1}^{k}x_{i1}|<q$ implies one of the four cases above. Consider analogous 
cases implied by $r-1\leq|\sum_{i=1}^{k}x_{i2}|<r$, and for $i,j=1,2,3,4$ let $A_k^{i,j}(q,r)$ denote the subset of
$$
\{(x_1,...,x_k)\in\RR^{2k}:x_{k1}, x_{k2}\geq1, |x_{i1}|, |x_{i2}|\geq 1,i=1,...,k-1\}
$$
with the case $i$ applied to $x_{11},...,x_{k1}$ and the case $j$ applied to $x_{12},...,x_{k2}$. For instance,
\begin{align*}
A_k^{1,1}(q,r)&=\Bigl\{(x_1,...,x_k)\in\RR^{2k}:x_{k1}, x_{k2}\geq1, |x_{i1}|, |x_{i2}|\geq 1,i=1,...,k-1,\Bigl.\\
&\Bigl.q+x_{k1}-1\leq\Big|\sum_{i=1}^{k-1}x_{i1}\Big|\leq q+x_{k1}, \ \ r+x_{k2}-1\leq\Big|\sum_{i=1}^{k-1}x_{i2}\Big|<r+x_{k2}\Bigl\}.
\end{align*}
We have
$$
A_k^{1,1}(q,r)\subseteq\widetilde A_k(q,r)\subseteq \bigcup_{i,j=1}^4 A_k^{i,j}(q,r).
$$
Hence, it is enough to show the lower bound of \eqref{TRintegral} with the integration restricted to $A_k^{1,1}(q,r)$ 
and the upper bound of \eqref{TRintegral} with the integration over all $A_k^{i,j}(q,r)$.

We proceed by induction on $k$. Note that the proposition is proved to hold for $k=2$ except the upper 
bound in the case $\alpha_1=2$ and $\alpha_2=1$. Clearly, analyzing \cite[Proof of Theorem 4]{LX} one can easily 
deduce that \eqref{DPintegral} holds also for any non-integer numbers $m,n\geq 1$.

Assume now the statement of 
the proposition holds for some $k\geq2$ and let $k'=k+1$. Consider first the integration over $A_{k'}^{1,1}(q,r)$. By 
applying the induction hypothesis  to $A_{k}(q+x_{k'1},r+x_{k'2})$ we get
\begin{equation}\label{Eq:star}
\begin{split}
&\idotsint_{A^{1,1}_{k'}(q,r)}\prod_{i=1}^{k'}\frac{1}{|x_{i1}|^{\alpha_1}+|x_{i2}|^{\alpha_2}}\,dx_1 \dots dx_{k'}\\
&\asymp\int_1^\infty\int_1^\infty\left(\frac{1}{(q+x_{k'1})^{\alpha_1}+(r+x_{k'2})^{\alpha_2}}\right)^{k-(k-1)(1/\alpha_1+1/\alpha_2)}\frac{dx_{k'1}dx_{k'2}}{x_{k'1}^{\alpha_1}+x_{k'2}^{\alpha_2}}\\
&\asymp\int_1^\infty\int_1^\infty\left(\frac{1}{q^{\alpha_1}+r^{\alpha_2}+t^{\alpha_1}+s^{\alpha_2}}\right)^{k-(k-1)(1/\alpha_1+1/\alpha_2)}\frac{dtds}{t^{\alpha_1}+s^{\alpha_2}}\\
&\asymp\int_1^\infty\int_1^\infty\left(\frac{1}{q^{\alpha_1}+r^{\alpha_2}}\wedge\frac{1}{t^{\alpha_1}}\wedge\frac{1}{s^{\alpha_2}}\right)^{k-(k-1)(1/\alpha_1+1/\alpha_2)}\left(\frac{1}{t^{\alpha_1}}\wedge\frac{1}{s^{\alpha_2}}\right)dtds\\
&=\int_1^\infty\int_1^{s^{\alpha_2/\alpha_1}}\frac{1}{s^{\alpha_2}}\left(\frac{1}{q^{\alpha_1}+r^{\alpha_2}}\wedge\frac{1}{s^{\alpha_2}}\right)^{k-(k-1)(1/\alpha_1+1/\alpha_2)}dtds\\
&\qquad +\int_1^\infty\int_{s^{\alpha_2/\alpha_1}}^\infty\frac{1}{t^{\alpha_1}}\left(\frac{1}{q^{\alpha_1}+r^{\alpha_2}}\wedge\frac{1}{t^{\alpha_1}}\right)^{k-(k-1)(1/\alpha_1+1/\alpha_2)}dtds\\
& := I_1+I_2.
\end{split}
\end{equation}
Recall that for $k=2=\alpha_1$ and $\alpha_2=1$ only the lower bound of the above is true. We have
\begin{equation}\label{Eq:star2}
\begin{split}
I_2&=\int_1^\infty\int_{s^{\alpha_2/\alpha_1}}^\infty\frac{1}{t^{\alpha_1}}\left(\frac{1}{q^{\alpha_1}+r^{\alpha_2}}\wedge\frac{1}
{t^{\alpha_1}}\right)^{k-(k-1)(1/\alpha_1+1/\alpha_2)}dtds\\
&=\int_1^{(q^{\alpha_1}+r^{\alpha_2})^{1/\alpha_2}}\int_{s^{\alpha_2/\alpha_1}}^\infty\frac{1}{t^{\alpha_1}}\left(\frac{1}
{q^{\alpha_1}+r^{\alpha_2}}\wedge\frac{1}{t^{\alpha_1}}\right)^{k-(k-1)(1/\alpha_1+1/\alpha_2)}dtds\\
&\qquad  +\int^\infty_{(q^{\alpha_1}+r^{\alpha_2})^{1/\alpha_2}}\int_{s^{\alpha_2/\alpha_1}}^\infty t^{(k-1)(1+\alpha_1/\alpha_2)-k'\alpha_1}dtds
:=I_2^{(1)}+I_2^{(2)}.
\end{split}
\end{equation}
For $k'-k(1/\alpha_1+1/\alpha_2)>0$ we get
\begin{align*}
I_2^{(2)}=C\int^\infty_{(q^{\alpha_1}+r^{\alpha_2})^{1/\alpha_2}} s^{k(\alpha_2/\alpha_1+1)-k'\alpha_2-1}ds=C(q^{\alpha_1}+r^{\alpha_2})^{k(1/\alpha_1+1/\alpha_2)-k'}.
\end{align*}
This gives the lower bound in \eqref{TRintegral}. If $k'-k(1/\alpha_1+1/\alpha_2)\leq0$, then $I_2^{(2)}=\infty$, so the same holds for 
the integral in \eqref{TRintegral} and the last part of the statement follows. 

It remains to prove the upper bound in \eqref{TRintegral}  for the case of $k'-k(1/\alpha_1+1/\alpha_2)>0$. Note that the latter also implies 
$\alpha_2>1$, so the restriction in the case $k=2$ is no longer needed. We now proceed to bound the integrals in \eqref{TRintegral} over 
the sets $A^{i,j}_{k'}(q,r)$, separately. 

On the set $A^{1,1}_{k'}(q,r)$, we make use of (\ref{Eq:star}) and (\ref{Eq:star2})  and derive
\begin{align*}
I_2^{(1)}&=\int_1^{(q^{\alpha_1}+r^{\alpha_2})^{1/\alpha_2}}\int_{s^{\alpha_2/\alpha_1}}^\infty\frac{1}{t^{\alpha_1}}
\left(\frac{1}{q^{\alpha_1}
+r^{\alpha_2}}\wedge\frac{1}{t^{\alpha_1}}\right)^{k-(k-1)(1/\alpha_1+1/\alpha_2)}dtds\\
&=\left(\frac{1}{q^{\alpha_1}+r^{\alpha_2}}\right)^{k-(k-1)(1/\alpha_1+1/\alpha_2)}\int_1^{(q^{\alpha_1}+r^{\alpha_2})^{1/\alpha_2}}
\int_{s^{\alpha_2/\alpha_1}}^{(q^{\alpha_1}+r^{\alpha_2})^{1/\alpha_1}}t^{-\alpha_1}dtds\\
&+\int_1^{(q^{\alpha_1}+r^{\alpha_2})^{1/\alpha_2}}\int_{(q^{\alpha_1}+r^{\alpha_2})^{1/\alpha_1}}^\infty t^{(k-1)(1+\alpha_1/\alpha_2)-k'\alpha_1}dtds\\
&\lesssim\left(\frac{1}{q^{\alpha_1}+r^{\alpha_2}}\right)^{k-(k-1)(1/\alpha_1+1/\alpha_2)}\int_1^{(q^{\alpha_1}+r^{\alpha_2})^{1/\alpha_2}}
s^{\alpha_2/\alpha_1-\alpha_2}ds\\
&+\int_1^{(q^{\alpha_1}+r^{\alpha_2})^{1/\alpha_2}} (q^{\alpha_1}+r^{\alpha_2})^{(k-1)(1/\alpha_1+1/\alpha_2)-k'+1/\alpha_1}ds\\
&\lesssim(q^{\alpha_1}+r^{\alpha_2})^{k(1/\alpha_1+1/\alpha_2)-k'},
\end{align*}
where the last inequality follows from the assumption $\alpha_2<2$. Furthermore,
\begin{align*}
I_1&=\int_1^\infty\int_1^{s^{\alpha_2/\alpha_1}}\frac{1}{s^{\alpha_2}}\left(\frac{1}{q^{\alpha_1}+r^{\alpha_2}}\wedge\frac{1}{s^{\alpha_2}}\right)^{k-(k-1)(1/\alpha_1+1/\alpha_2)}dtds\\
&\leq\int_1^\infty s^{\alpha_2/\alpha_1-\alpha_2}\left(\frac{1}{q^{\alpha_1}+r^{\alpha_2}}\wedge\frac{1}{s^{\alpha_2}}\right)^{k-(k-1)(1/\alpha_1+1/\alpha_2)}ds\\
&=\left(\frac{1}{q^{\alpha_1}+r^{\alpha_2}}\right)^{k-(k-1)(1/\alpha_1+1/\alpha_2)}\int_1^{(q^{\alpha_1}+r^{\alpha_2})^{1/\alpha_2}} s^{\alpha_2/\alpha_1-\alpha_2}ds\\
&+\int_{(q^{\alpha_1}+r^{\alpha_2})^{1/\alpha_2}}^\infty s^{k(\alpha_2/\alpha_1+1)-k'\alpha_2-1}ds\\
&\lesssim(q^{\alpha_1}+r^{\alpha_2})^{k(1/\alpha_1+1/\alpha_2)-k'}.
\end{align*}
This proves the desired upper bound for the integral in \eqref{TRintegral}  over $A_{k'}^{1,1}(q,r)$.

Next we consider the integral over $A_{k'}^{2,2}(q,r)$. By applying the induction hypothesis to the integral over $A_{k}(q-x_{k'1},r-x_{k'2})$ we get
\begin{align*}
&\idotsint_{A^{2,2}_{k'}(q,r)}\prod_{i=1}^{k'}\frac{1}{|x_{i1}|^{\alpha_1}+|x_{i2}|^{\alpha_2}}\,dx_1 \dots dx_{k'}\\
&\asymp\int_1^{r-1}\int_1^{q-1}\left(\frac{1}{(q-x_{k'1})^{\alpha_1}+(r-x_{k'2})^{\alpha_2}}\right)^{k-(k-1)(1/\alpha_1+1/\alpha_2)}\frac{dx_{k'1}dx_{k'2}}{x_{k'1}^{\alpha_1}+x_{k'2}^{\alpha_2}}\\
&\leq (q^{\alpha_1}+r^{\alpha_2})^{(k-1)(1/\alpha_1+1/\alpha_2-1)}\int_1^{r-1}\int_1^{q-1}\frac{dtds}{((q-t)^{\alpha_1}+(r-s)^{\alpha_2})(t^{\alpha_1}+s^{\alpha_2})}.
\end{align*}
The last double integral above has already appeared in \cite[(11)]{LX}, so it follows from \cite[Proof of Theorem 4]{LX} that
$$
\int_1^{r-1}\int_1^{q-1}\frac{dtds}{((q-t)^{\alpha_1}+(r-s)^{\alpha_2})(t^{\alpha_1}+s^{\alpha_2})}\leq C(q^{\alpha_1}+r^{\alpha_2})^{1/\alpha_1+1/\alpha_2-2}.
$$
This gives the desired upper bound for the integral over $A_{k'}^{2,2}(q,r)$.

For the integral over $A^{3,3}_{k'}(q,r)$, we have
\begin{align*}
&\idotsint_{A^{3,3}_{k'}(q,r)}\prod_{i=1}^{k'}\frac{1}{|x_{i1}|^{\alpha_1}+|x_{i2}|^{\alpha_2}}\,dx_1 \dots dx_{k'}\\
&\lesssim\frac{1}{q^{\alpha_1}+r^{\alpha_2}}\idotsint_{B_k}\prod_{i=1}^{k}\frac{1}{|x_{i1}|^{\alpha_1}+|x_{i2}|^{\alpha_2}}\,dx_1 \dots dx_{k},
\end{align*}
where $B_k=A_{k}(1,1)\cup A_{k}(2,2)\cup A_{k}(2,1)\cup A_{k}(1,2)$. By the induction hypothesis, the last multiple integral 
is finite provided $k-(k-1)(1/\alpha_1+1/\alpha_2)>0$. Since $1/\alpha_1+1/\alpha_2\geq1$, we have
$$
k-(k-1)(1/\alpha_1+1/\alpha_2)\geq k'-k(1/\alpha_1+1/\alpha_2)>0.
$$
Furthermore, $(q^{\alpha_1}+r^{\alpha_2})^{-1}\leq(q^{\alpha_1}+r^{\alpha_2})^{k(1/\alpha_1+1/\alpha_2)-k'}$. This gives 
the upper bound in \eqref{TRintegral} with the integration over $A^{3,3}_{k'}(q,r)$.

Consider $A_{k'}^{4,4}(q,r)$. We apply the induction hypothesis to the integration over $A_{k}(x_{k'1}-q,x_{k'2}-r)$ and get
\begin{align*}
&\idotsint_{A^{4,4}_{k'}(q,r)}\prod_{i=1}^{k'}\frac{1}{|x_{i1}|^{\alpha_1}+|x_{i2}|^{\alpha_2}}\,dx_1 \dots dx_{k'}\\
&\asymp\int_{r+1}^\infty\int_{q+1}^\infty\left(\frac{1}{(x_{k'1}-q)^{\alpha_1}+(x_{k'2}-r)^{\alpha_2}}\right)^{k-(k-1)(1/\alpha_1+1/\alpha_2)}
\frac{dx_{k'1}dx_{k'2}}{x_{k'1}^{\alpha_1}+x_{k'2}^{\alpha_2}}.
\end{align*}
After the change of variables $t=x_{k'1}-q$ and $s=x_{k'2}-r$ the last term equals
\begin{align*}
&\int_{1}^\infty\int_{1}^\infty\left(\frac{1}{t^{\alpha_1}+s^{\alpha_2}}\right)^{k-(k-1)(1/\alpha_1+1/\alpha_2)}\frac{dtds}{(t+q)^{\alpha_1}+(s+r)^{\alpha_2}}\\
&\asymp\int_1^\infty\int_1^\infty\left(\frac{1}{t^{\alpha_1}+s^{\alpha_2}}\right)^{k-(k-1)(1/\alpha_1+1/\alpha_2)}\frac{dtds}{q^{\alpha_1}+r^{\alpha_2}+t^{\alpha_1}+s^{\alpha_2}}\\
&\asymp\int_1^\infty\int_1^\infty\left(\frac{1}{t^{\alpha_1}}\wedge\frac{1}{s^{\alpha_2}}\right)^{k-(k-1)(1/\alpha_1+1/\alpha_2)}\left(\frac{1}{q^{\alpha_1}+r^{\alpha_2}}\wedge\frac{1}{t^{\alpha_1}}\wedge\frac{1}{s^{\alpha_2}}\right)dtds\\
&=\int_1^\infty\int_1^{s^{\alpha_2/\alpha_1}}s^{(k-1)(1+\alpha_2/\alpha_1)-k\alpha_2}\left(\frac{1}{q^{\alpha_1}+r^{\alpha_2}}\wedge\frac{1}{s^{\alpha_2}}\right)dtds\\
&+\int_1^\infty\int_{s^{\alpha_2/\alpha_1}}^\infty t^{(k-1)(1+\alpha_1/\alpha_2)-k\alpha_1}\left(\frac{1}{q^{\alpha_1}+r^{\alpha_2}}\wedge
\frac{1}{t^{\alpha_1}}\right)dtds=\widetilde I_1+\widetilde I_2.
\end{align*}
We have
\begin{align*}
\widetilde I_1&\leq \int_1^\infty s^{k(1+\alpha_2/\alpha_1-\alpha_2)-1}\left(\frac{1}{q^{\alpha_1}+r^{\alpha_2}}\wedge\frac{1}
{s^{\alpha_2}}\right)ds\\
&=\frac{1}{q^{\alpha_1}+r^{\alpha_2}}\int_1^{(q^{\alpha_1}+r^{\alpha_2})^{1/\alpha_2}} s^{k(1+\alpha_2/\alpha_1-\alpha_2)-1}ds
+\int_{(q^{\alpha_1}+r^{\alpha_2})^{1/\alpha_2}}^\infty s^{k(1+\alpha_2/\alpha_1)-k'\alpha_2-1}ds\\
&\lesssim(q^{\alpha_1}+r^{\alpha_2})^{k(1/\alpha_1+1/\alpha_2)-k'}.
\end{align*}
In the last step above we have used again the assumption $\alpha_2<2$. Furthermore,
\begin{align*}
\widetilde I_2&=\frac{1}{q^{\alpha_1}+r^{\alpha_2}}\int_1^{(q^{\alpha_1}+r^{\alpha_2})^{1/\alpha_2}}
\int_{s^{\alpha_2/\alpha_1}}^{(q^{\alpha_1}+r^{\alpha_2})^{1/\alpha_1}} t^{(k-1)(1+\alpha_1/\alpha_2)-k\alpha_1}dtds\\
&\qquad +\int_1^{(q^{\alpha_1}+r^{\alpha_2})^{1/\alpha_2}}\int_{(q^{\alpha_1}+r^{\alpha_2})^{1/\alpha_1}}^\infty t^{(k-1)(1+\alpha_1/\alpha_2)-k'\alpha_1}dtds\\
&\qquad +\int_{(q^{\alpha_1}+r^{\alpha_2})^{1/\alpha_2}}^\infty\int_{s^{\alpha_2/\alpha_1}}^\infty t^{(k-1)(1+\alpha_1/\alpha_2)-k'\alpha_1}dtds.
\end{align*}
The last two integrals appeared already in the case of $A_{k'}^{1,1}(q,r)$, i.e., $I_2^{(2)}$ and the second part of $I_2^{(1)}$, thus they are less than $C(q^{\alpha_1}+r^{\alpha_2})^{k(1/\alpha_1+1/\alpha_2)-k'}$. In order to estimate the remaining term we note again that ${1/\alpha_1+1/\alpha_2}\geq1$, hence
\begin{align*}
&\frac{1}{q^{\alpha_1}+r^{\alpha_2}}\int_1^{(q^{\alpha_1}+r^{\alpha_2})^{1/\alpha_2}}\int_{s^{\alpha_2/\alpha_1}}^{(q^{\alpha_1}
+r^{\alpha_2})^{1/\alpha_1}} t^{-\alpha_1}(t^{\alpha_1})^{(k-1)(1/\alpha_1+1/\alpha_2-1)}dtds\\
&\le (q^{\alpha_1}+r^{\alpha_2})^{(k-1)(1/\alpha_1+1/\alpha_2)-k}\int_1^{(q^{\alpha_1}+r^{\alpha_2})^{1/\alpha_2}}
\int_{s^{\alpha_2/\alpha_1}}^{(q^{\alpha_1}+r^{\alpha_2})^{1/\alpha_1}} t^{-\alpha_1}dtds.
\end{align*}
Since the last term is equal to the first part of $I_2^{(1)}$, the desired upper bound follows.

Consider the integration over $A_{k'}^{1,2}(q,r)$. Again, by applying the induction hypotheses to $A_{k}(q+x_{k'1},r-x_{k'2})$, we get
\begin{align}
&\idotsint_{A^{1,2}_{k'}(q,r)}\prod_{i=1}^{k'}\frac{1}{|x_{i1}|^{\alpha_1}+|x_{i2}|^{\alpha_2}}\,dx_1 \dots dx_{k'}\nonumber\\
&\asymp\int_{1}^{r-1}\int_{1}^\infty\left(\frac{1}{(q+x_{k'1})^{\alpha_1}+(r-x_{k'2})^{\alpha_2}}\right)^{k-(k-1)(1/\alpha_1+1/\alpha_2)}
\frac{dx_{k'1}dx_{k'2}}{x_{k'1}^{\alpha_1}+x_{k'2}^{\alpha_2}}\label{case12}\\
&\lesssim\int_{1}^\infty(q+t)^{(k-1)(1+\alpha_1/\alpha_2)-k\alpha_1}\int_{1}^{r-1}(s+t^{\alpha_1/\alpha_2})^{-\alpha_2}dsdt\nonumber\\
&=\int_{1}^\infty(q+t)^{(k-1)(1+\alpha_1/\alpha_2)-k\alpha_1}\int_{1+t^{\alpha_1/\alpha_2}}^{r+t^{\alpha_1/\alpha_2}-1}s^{-\alpha_2}dsdt.\nonumber
\end{align}
Recall that the condition $k'-k(1/\alpha_1+1/\alpha_2)>0$ implies $\alpha_2>1$, hence the last integral is less than
\begin{align}
& C\int_{1}^\infty(q+t)^{(k-1)(1+\alpha_1/\alpha_2)-k\alpha_1}t^{\alpha_1/\alpha_2-\alpha_1}dt\nonumber\\
 &\lesssim  q^{(k-1)(1+\alpha_1/\alpha_2)-k\alpha_1}\int_{1}^q t^{\alpha_1/\alpha_2-\alpha_1}dt+\int_{q}^\infty t^{k(1+\alpha_1/\alpha_2)-k'\alpha_1-1}dt\nonumber\\
 & \lesssim q^{k(1+\alpha_1/\alpha_2)-k'\alpha_1}.\nonumber
\end{align}
It remains to estimate the integral \eqref{case12} in terms of $r$. We have
\begin{align*}
&\int_{1}^{r/2}\int_{1}^\infty\left(\frac{1}{(q+t)^{\alpha_1}+(r-s)^{\alpha_2}}\right)^{k-(k-1)(1/\alpha_1+1/\alpha_2)}\frac{dtds}{t^{\alpha_1}+s^{\alpha_2}}\\
&\lesssim  r^{(k-1)(1+\alpha_2/\alpha_1)-k\alpha_2}\int_{1}^{r/2}\int_{1}^\infty(t+s^{\alpha_2/\alpha_1})^{-\alpha_1}dtds\\
&\lesssim  r^{(k-1)(1+\alpha_2/\alpha_1)-k\alpha_2}\int_{1}^{r/2}s^{\alpha_2/\alpha_1-\alpha_2}ds.
\end{align*}
Since $\alpha_2<2$, we have $\alpha_2/\alpha_1-\alpha_2>-1$ and the last term above is less than $Cr^{k(1+\alpha_2/\alpha_1)-k'\alpha_2}$. 

Furthermore, for the second part of \eqref{case12} one gets
\begin{align*}
&\int_{r/2}^{r-1}\int_{1}^\infty\left(\frac{1}{(q+t)^{\alpha_1}+(r-s)^{\alpha_2}}\right)^{k-(k-1)(1/\alpha_1+1/\alpha_2)}\frac{dtds}{t^{\alpha_1}+s^{\alpha_2}}\\
&= \int_{1}^{r/2}\int_{1}^\infty\left(\frac{1}{(q+t)^{\alpha_1}+u^{\alpha_2}}\right)^{k-(k-1)(1/\alpha_1+1/\alpha_2)}\frac{dtdu}{t^{\alpha_1}+(r-u)^{\alpha_2}}\\
&\lesssim \int_{1}^{r/2}\int_{1}^\infty\left(\frac{1}{t^{\alpha_1}+u^{\alpha_2}}\right)^{k-(k-1)(1/\alpha_1+1/\alpha_2)}\frac{dtdu}{t^{\alpha_1}+r^{\alpha_2}}\\
&=\int_{1}^{r/2}\int_{1}^\infty\frac{(t^{\alpha_1}+u^{\alpha_2})^{(k-1)(1/\alpha_1+1/\alpha_2-1)}}{t^{\alpha_1}+u^{\alpha_2}}\frac{dtdu}{t^{\alpha_1}+r^{\alpha_2}}\\
&\leq \int_{1}^{r/2}\int_{1}^\infty\left(\frac{1}{t^{\alpha_1}+r^{\alpha_2}}\right)^{k-(k-1)(1/\alpha_1+1/\alpha_2)}\frac{dtdu}{t^{\alpha_1}+u^{\alpha_2}}\\
&\lesssim  r^{(k-1)(1+\alpha_2/\alpha_1)-k\alpha_2}\int_{1}^{r/2}\int_{1}^\infty(t+u^{\alpha_2/\alpha_1})^{-\alpha_1}dtdu\\
&\lesssim  r^{k(1+\alpha_2/\alpha_1)-k'\alpha_2},
\end{align*}
where the last inequality follows from estimating the first part of \eqref{case12}. Altogether we get that \eqref{case12} is less than
$$
C(q^{k(1+\alpha_1/\alpha_2)-k'\alpha_1}\wedge r^{k(1+\alpha_2/\alpha_1)-k'\alpha_2})\asymp (q^{\alpha_1}+r^{\alpha_2})^{k(1/\alpha_1+1/\alpha_2)-k'}.
$$
The case of $A_{k'}^{2,1}(q,r)$ is similar. Consider the integration over $A_{k'}^{2,4}(q,r)$. The induction hypothesis applied to $A_{k}(q-x_{k'1},x_{k'2}-r)$ gives
\begin{align*}
&\idotsint_{A^{2,4}_{k'}(q,r)}\prod_{i=1}^{k'}\frac{1}{|x_{i1}|^{\alpha_1}+|x_{i2}|^{\alpha_2}}\,dx_1 \dots dx_{k'}\nonumber\\
&\asymp\int_{r+1}^{\infty}\int_{1}^{q-1}\left(\frac{1}{(q-t)^{\alpha_1}+(s-r)^{\alpha_2}}\right)^{k-(k-1)(1/\alpha_1+1/\alpha_2)}\frac{dtds}{t^{\alpha_1}+s^{\alpha_2}}\\
&=\int_{1}^{\infty}\int_{1}^{q-1}\left(\frac{1}{(q-t)^{\alpha_1}+u^{\alpha_2}}\right)^{k-(k-1)(1/\alpha_1+1/\alpha_2)}\frac{dtdu}{t^{\alpha_1}+(u+r)^{\alpha_2}}\\
&=\int_{1}^{r-1}\int_{1}^{q-1}\ldots+\int_{r-1}^{\infty}\int_{1}^{q-1}\ldots
=\widehat I_1+\widehat I_2.
\end{align*}
In the last equality we have assumed $r\geq2$, the case $1\leq r<2$ being irrelevant. We have
\begin{align*}
\widehat I_1&\leq\int_{1}^{r-1}\int_{1}^{q-1}\left(\frac{1}{(q-t)^{\alpha_1}+u^{\alpha_2}}\right)^{k-(k-1)(1/\alpha_1+1/\alpha_2)}\frac{dtdu}{t^{\alpha_1}+(r-u)^{\alpha_2}}\\
&=\int_{1}^{r-1}\int_{1}^{q-1}\left(\frac{1}{(q-t)^{\alpha_1}+(r-v)^{\alpha_2}}\right)^{k-(k-1)(1/\alpha_1+1/\alpha_2)}\frac{dtdv}{t^{\alpha_1}+v^{\alpha_2}}.
\end{align*}
Note that the integral above is exactly the one from the case of $A_{k'}^{2,2}(q,r)$, so it is less than $C(q^{\alpha_1}+r^{\alpha_2})^{k(1/\alpha_1+1/\alpha_2)-k'}$. 
Furthermore,
\begin{align*}
\widehat I_2&=\int_{1}^{\infty}\int_{1}^{q-1}\left(\frac{1}{(q-t)^{\alpha_1}+(v+r-2)^{\alpha_2}}\right)^{k-(k-1)(1/\alpha_1+1/\alpha_2)}\frac{dtdv}{t^{\alpha_1}+(v+2r-2)^{\alpha_2}}\\
&\lesssim\int_{1}^{\infty}\int_{1}^{q-1}\left(\frac{1}{(q-t)^{\alpha_1}+(v+r)^{\alpha_2}}\right)^{k-(k-1)(1/\alpha_1+1/\alpha_2)}\frac{dtdv}{t^{\alpha_1}+v^{\alpha_2}}.
\end{align*}
The last integral above appears in the case of $A_{k'}^{2,1}(q,r)$, so it is less than $C(q^{\alpha_1}+r^{\alpha_2})^{k(1/\alpha_1+1/\alpha_2)-k'}$.
This gives the upper bound with the integration over $A_{k'}^{2,4}(q,r)$. The proof for $A_{k'}^{4,2}(q,r)$ is similar.

It remains to show the upper bound of \eqref{TRintegral} with the integration over the sets $A_{k'}^{1,3}(q,r)$, $A_{k'}^{3,1}(q,r)$, $A_{k'}^{2,3}(q,r)$, 
$A_{k'}^{3,2}(q,r)$, $A_{k'}^{3,4}(q,r)$ and $A_{k'}^{4,3}(q,r)$. This can be done in a similar way and is actually easier since the integration is reduced 
to a smaller number of variables. We omit the details.
\end{proof}

In order to evaluate the right hand side of \eqref{eq:DimTriple}, we prove the following equality which extends \cite[Lemma 1]{LX}.
\begin{lemma}\label{lem:series}
We have
\begin{align*}
&\inf\left\{\beta\in(0,2]:\sum_{m=1}^{\infty}\sum_{n=1}^{\infty}\frac{1}{m^{\beta}+n^{\beta}}
\left(\frac{1}{m^{\alpha_1}+ n^{\alpha_2}}\right)^{k-(k-1)(1/\alpha_1+1/\alpha_2)}<\infty\right\}\\
&=\max\left\{2-\alpha_1\left(k-(k-1)\left(\alpha^{-1}_1+\alpha^{-1}_2\right)\right), \
k\alpha_2\left(\alpha^{-1}_1+\alpha^{-1}_2-1\right)\right\}.
\end{align*}
\end{lemma}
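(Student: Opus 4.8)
The plan is to turn the convergence question into a purely geometric optimization over the first quadrant by means of a dyadic decomposition. Write $\gamma:=k-(k-1)(\alpha_1^{-1}+\alpha_2^{-1})$, which in the regime where the lemma is applied satisfies $\gamma>0$; this is the only case I will treat, since for $\gamma\le 0$ the factor $(m^{\alpha_1}+n^{\alpha_2})^{-\gamma}$ does not decay and the series diverges for every $\beta$. For fixed $\beta>0$ the summand is nonincreasing in each of $m$ and $n$, so no information is lost by grouping the indices into dyadic blocks and estimating the summand up to multiplicative constants on each block.

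Concretely, the first step is to collect, for $i,j\ge 0$, the terms with $m\asymp 2^i$ and $n\asymp 2^j$. Such a block contains $\asymp 2^{i+j}$ lattice points, and using $m^\beta+n^\beta\asymp\max(m,n)^\beta$ together with $m^{\alpha_1}+n^{\alpha_2}\asymp\max(m^{\alpha_1},n^{\alpha_2})$, the summand there is comparable to $2^{-\beta\max(i,j)}\,2^{-\gamma\max(\alpha_1 i,\alpha_2 j)}$. Monotonicity makes this comparison two-sided, so the whole series is comparable to $\sum_{i,j\ge 0}2^{E_\beta(i,j)}$, where
$$
E_\beta(i,j):=i+j-\beta\max(i,j)-\gamma\max(\alpha_1 i,\alpha_2 j)
$$
is piecewise linear and positively homogeneous of degree one.

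The key observation is then that, by homogeneity, this dyadic sum converges if and only if $E_\beta<0$ on the whole closed first quadrant minus the origin. Summing along the anti-diagonals $i+j=N$ gives $\sum_{i+j=N}2^{E_\beta(i,j)}\le (N+1)\,2^{N M(\beta)}$, where $M(\beta):=\sup\{E_\beta(i,j):i,j\ge 0,\ i+j=1\}$, so the sum converges when $M(\beta)<0$ and diverges when $M(\beta)>0$ (an open cone of directions then forces infinitely many blocks with $E_\beta$ bounded below by a positive multiple of $N$). Since $\max(i,j)\ge\tfrac12$ on the simplex $\{i+j=1\}$, the quantity $M(\beta)$ is continuous and \emph{strictly} decreasing in $\beta$; hence there is a unique threshold, and the sought infimum equals
$$
\beta^\ast=\sup_{i,j\ge 0,\ (i,j)\ne(0,0)}\frac{i+j-\gamma\max(\alpha_1 i,\alpha_2 j)}{\max(i,j)}.
$$
A pleasant consequence of working with $M(\beta)$ rather than the raw series is that the delicate borderline case $\beta=\beta^\ast$ (where $E_{\beta^\ast}$ vanishes along a ray) never has to be resolved, as it does not affect the infimum.

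It then remains to evaluate this supremum, a routine but slightly fiddly optimization. I would normalize $\max(i,j)=1$ and split the quadrant along the two relevant lines $i=j$ and $\alpha_1 i=\alpha_2 j$; since $\alpha_1\ge\alpha_2$ the latter lies above the diagonal. On $\{i\ge j\}$ one has $\max(\alpha_1 i,\alpha_2 j)=\alpha_1 i$, and the ratio is maximized at $i=j$, yielding $2-\alpha_1\gamma$. On $\{j>i\}$ the ratio is piecewise linear in $i$ with breakpoints $i\in\{0,\alpha_2/\alpha_1,1\}$, and evaluating there the only competing candidate is $1+\alpha_2/\alpha_1-\gamma\alpha_2$; substituting the value of $\gamma$ and simplifying rewrites this as $k\alpha_2(\alpha_1^{-1}+\alpha_2^{-1}-1)$, so taking the larger of the two candidates gives the claimed maximum. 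The main technical point is the homogeneity argument identifying convergence with strict negativity of $E_\beta$; the dyadic block estimate and the final casework are elementary, and one checks directly from $\gamma>0$ and $0<\alpha_2\le\alpha_1\le 2$ that $\beta^\ast\le 2$, so that the infimum is indeed attained within $(0,2]$.
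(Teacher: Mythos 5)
Your proof is correct, and it takes a genuinely different route from the paper's. The paper compares the double series with the integral $\int_1^\infty\int_1^\infty (x^{-1}\wedge y^{-1})^\beta\,(x^{-\alpha_1}\wedge y^{-\alpha_2})^{\gamma}\,dx\,dy$, splits the quadrant along $x=y^{\alpha_2/\alpha_1}$ and $x=y$ into three pieces $I_1,I_2,I_3$, and computes each explicitly --- including a separate logarithmic sub-case when the exponent $-k\alpha_1+(k-1)(1+\alpha_1/\alpha_2)$ equals $-1$ --- showing that $\beta$ above the claimed maximum makes all three finite, while finiteness of $I_1$ and $I_3$ alone forces $\beta$ above that maximum. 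Your dyadic-block reduction to the homogeneous piecewise-linear exponent $E_\beta$ replaces this case analysis with a Newton-polygon-style optimization on the simplex, and the correspondence is exact: your diagonal direction $i=j$ reproduces the paper's $I_3$-threshold $2-\alpha_1\gamma$, and the direction $\alpha_1 i=\alpha_2 j$ reproduces the $I_1$-threshold $k\alpha_2(\alpha_1^{-1}+\alpha_2^{-1}-1)$. What your route buys: the strict monotonicity and continuity of $M(\beta)$ locate the threshold without ever deciding convergence at a critical $\beta$, so the paper's logarithmic borderline sub-case disappears, and the argument generalizes with no extra work to any product of such homogeneous factors; what the paper's route buys is explicit tail asymptotics, in the same computational style as Proposition~\ref{th:Ibeta}, which the rest of the paper needs anyway. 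Two points you should make explicit in a final write-up: the two-sided block comparison requires monotonicity of the summand in each variable together with bounded doubling of $m^\beta+n^\beta$ and $m^{\alpha_1}+n^{\alpha_2}$, and this is precisely where $\gamma>0$ enters; and your restriction to $\gamma>0$ is harmless, since the lemma is only invoked in tandem with Proposition~\ref{th:Ibeta} (which requires $\gamma>0$), and under that hypothesis one has $2-k\alpha_2(\alpha_1^{-1}+\alpha_2^{-1}-1)\geq\alpha_2\gamma>0$ (as the paper notes in Section~\ref{sec4}), so your $\beta^\ast$ indeed lies in $[0,2)$ and the infimum over $(0,2]$ is exactly $\beta^\ast$.
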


\begin{proof}
The convergence of the series is equivalent to the convergence of the integral
\begin{align*}
&\int_1^{\infty}\int_1^{\infty}\left(\frac{1}{x}\wedge\frac{1}{y}\right)^{\beta}\left(\frac{1}{x^{\alpha_1}}
\wedge\frac{1}{y^{\alpha_2}}\right)^{k-(k-1)(1/\alpha_1+1/\alpha_2)}dxdy\\
&=\int_1^{\infty}\int_1^{y^{\alpha_2/\alpha_1}}\frac{1}{y^{\beta}}\left(\frac{1}{y^{\alpha_2}}\right)^{k-(k-1)(1/\alpha_1+1/\alpha_2)}dxdy\\
&\qquad +\int_1^{\infty}\int^y_{y^{\alpha_2/\alpha_1}}\frac{1}{y^{\beta}}\left(\frac{1}{x^{\alpha_1}}\right)^{k-(k-1)(1/\alpha_1+1/\alpha_2)}dxdy\\
&\qquad +\int_1^{\infty}\int_y^{\infty}\frac{1}{x^{\beta}}\left(\frac{1}{x^{\alpha_1}}\right)^{k-(k-1)(1/\alpha_1+1/\alpha_2)}dxdy\\
&:= I_1+I_2+I_3.
\end{align*}
It can be seen that $I_1<\infty$ if and only if
$$
\int_1^{\infty}y^{k\alpha_2(1/\alpha_1+1/\alpha_2-1)-\beta-1}dy<\infty,
$$
and the last condition is equivalent with $\beta>k\alpha_2(\alpha_1^{-1}+\alpha_2^{-1}-1)$.

Next we consider $I_2$. If $\alpha_1=\alpha_2$, then $I_2=0$, so assume that $\alpha_1\neq\alpha_2$. 
If $-k\alpha_1+(k-1)(1+\alpha_1/\alpha_2)=-1$,  then
$$
I_2= (1-\alpha_2/\alpha_1)\int_1^{\infty}y^{-\beta}\ln y \,dy.
$$
So $I_2<\infty$ if and only if $\beta>1=2-\alpha_1\left(k-(k-1)\left(\alpha^{-1}_1+\alpha^{-1}_2\right)\right)$. 
Suppose now $-k\alpha_1+(k-1)(1+\alpha_1/\alpha_2)\neq-1$. Then we have
$$
I_2=\int_1^{\infty}y^{-\beta}\left(\frac{y^{1-k\alpha_1+(k-1)(1+\alpha_1/\alpha_2)}-(y^{\alpha_2/\alpha_1})^{1-k\alpha_1
+(k-1)(1+\alpha_1/\alpha_2)}}{1-k\alpha_1+(k-1)(1+\alpha_1/\alpha_2)}\right)dy.
$$
We consider two cases:
\begin{enumerate}
\item[(a)] If $1-k\alpha_1+(k-1)(1+\alpha_1/\alpha_2)>0$, then
$$
I_2\leq C\int_1^{\infty}y^{1-k\alpha_1+(k-1)(1+\alpha_1/\alpha_2)-\beta}dy,
$$
and the last integral is finite if $\beta>2-\alpha_1\left(k-(k-1)\left(\alpha^{-1}_1+\alpha^{-1}_2\right)\right)$.
\item[(b)] If $1-k\alpha_1+(k-1)(1+\alpha_1/\alpha_2)<0$, then
$$
I_2\leq C\int_1^{\infty}y^{k\alpha_2\left(1/\alpha_1+1/\alpha_2-1\right)-1-\beta}dy,
$$
which  is finite if $\beta>k\alpha_2\left(\alpha^{-1}_1+\alpha^{-1}_2-1\right)$.
\end{enumerate}
Therefore, the condition $$\beta>\max\left\{2-\alpha_1\left(k-(k-1)\left(\alpha^{-1}_1+\alpha^{-1}_2\right)\right), \ 
k\alpha_2\left(\alpha^{-1}_1+\alpha^{-1}_2-1\right)\right\}$$ implies $I_2<\infty$.

Finally, we consider $I_3$. A necessary condition for $I_3<\infty$ is $$-k\alpha_1+(k-1)(1+\alpha_1/\alpha_2)-\beta<-1.$$ 
Assuming this we get
$$
I_3=\int_1^{\infty}\frac{y^{1-k\alpha_1+(k-1)(1+\alpha_1/\alpha_2)-\beta}}{1-k\alpha_1+(k-1)(1+\alpha_1/\alpha_2)-\beta}dy.
$$
Thus $I_3<\infty$ if and only if $\beta>2-\alpha_1\left(k-(k-1)\left(\alpha^{-1}_1+\alpha^{-1}_2\right)\right)$.

Therefore, we have proved that the condition
$$
\beta>\max\left\{2-\alpha_1\left(k-(k-1)\left(\alpha^{-1}_1+\alpha^{-1}_2\right)\right), \ k\alpha_2\left(\alpha^{-1}_1+\alpha^{-1}_2-1\right)\right\}
$$
implies $I_1,I_2,I_3<\infty$, and that the condition $I_1,I_3<\infty$ implies
$$
\beta>\max\left\{2-\alpha_1\left(k-(k-1)\left(\alpha^{-1}_1+\alpha^{-1}_2\right)\right), \ k\alpha_2\left(\alpha^{-1}_1+\alpha^{-1}_2-1\right)\right\}.
$$
This yields the conclusion of the lemma.
\end{proof}

\vspace{5mm}

\begin{proof1}
For $k=2$, the theorem is a reformulation of \cite[Corollary 3.8, Part (a)]{KMX}.
For $k\geq3$ and $\alpha_2<2$, the statement is a direct consequence of \eqref{eq:DimTriple}, \eqref{eq:2series}, 
Proposition~\ref{th:Ibeta} and Lemma~\ref{lem:series}. When $\alpha_2=\alpha_1=2$, we may apply Proposition~\ref{th:Ibeta} 
with $\widetilde\alpha_2:=\alpha_2-\varepsilon$ and let $\varepsilon\to0$ in order to obtain \tl{an} upper bound for $\beta$
in \eqref{eq:DimTriple}. However, this upper bound turns out to be 0, so the proof is complete.
\end{proof1}

\section{Existence of $k$-multiple points with $k\ge 3$}\label{sec4}

We will now focus on proving Theorem~\ref{th:ExMultPoints}. Since $X$ is symmetric, 
its \tl{continuous} transition density satisfies
$$
p_t(0)=\int_{\RR^d}\left(p_{t/2}(x)\right)^2dx>0.
$$
It follows from \cite[Proof of Theorem 1]{LRS} that the existence of $k$-multiple points of $X$ is 
equivalent to the existence of intersections of $k$ independent copies of $X$. Furthermore, by 
\cite[Theorem 2.1]{KX5}, $X$
is weakly unimodal. Hence, by \cite[Remark 6.6]{KX4}, $k$ independent
copies of $X$ intersect if and only if
\begin{equation}\label{eq:LevExpCond}
\int_{\RR^{d(k-1)}}\frac{1}{1+\Psi(\sum_{j=1}^{k-1}x_j)} \
\prod_{j=1}^{k-1}\frac{1}{1+\Psi(x_j)} \ d\overline{x}<\infty.
\end{equation}
We refer to \cite{E,LRS,FS,KX3} for appropriate conditions for the existence of intersections in terms of the potential density. 
Consider first $d=k=3$. Making the change of variables $x_1=\xi_1-\xi_{2}$, $x_2=\xi_{2}$ in \eqref{eq:LevExpCond}, we  
conclude that $M_3=\emptyset$  a.s. if and only if
\begin{equation}\label{eq:LevExpM3}
\int_{\RR^3}\int_{\RR^3}\frac{1}{1+\Psi(\xi_1)}\cdot\frac{1}{1+\Psi(\xi_1-\xi_2)}\cdot\frac{1}{1+\Psi(\xi_2)} \ d\xi_1d\xi_2<\infty.
\end{equation}
Since $\Psi(\xi)\leq\tl{C}\|\xi\|^2$ for all $\xi \in \RR^3$ with $\|\xi\|$ large enough, there exists $N\in\NN$ such that the 
integral in \eqref{eq:LevExpM3} can be estimated from below by
\begin{align*}
\tl{C\sum_{i=N}^\infty\frac{1}{i^2}\iint_{\|x\|,\|y\|\geq N}\frac{1}{\|x\|^2\|y\|^2} \ \mathds{1}_{\{i-1\leq\|x-y\|<i\}} \ dxdy.}
\end{align*}
Note that $\int \mathds{1}_{\{i-1\leq\tl{\|x-y\|}<i\}}dy\asymp i^2$, hence the last term above is greater than
\begin{align*}
\tl{C\sum_{i=N}^\infty\int_{\|x\|\geq N}\frac{1}{\|x\|^2}\cdot\frac{1}{\|x\|^2+i^2} \ dx\gtrsim
\sum_{i=N}^\infty\int_{\|x\|\geq i}\|x\|^{-4}dx\asymp\sum_{i=N}^\infty\frac{1}{i}=\infty.}
\end{align*}
This shows that $M_3=\emptyset$ a.s. for $d=3$ and the last statement of Theorem~\ref{th:ExMultPoints} follows. 
Furthermore, when $k=2$,  \eqref{eq:LevExpCond} is equivalent to
$$
\int_{\RR^d}\left(\frac{1}{1+\Psi(\xi)}\right)^2d\xi<\infty.
$$
Applying the same estimate of $\Psi$ as before we conclude that $M_2=\emptyset$ a.s. for $d\geq4$.

Recall \tl{from the Introduction} that for $d=2$ the stability exponent of
$X$ satisfies $B=PDP^{-1}$, where the matrix $D$ can have the following forms
\tl{\begin{enumerate}
\item[(A.1)] $\left(
	\begin{matrix}
		1/\alpha_1 & 0 \\
		0 & 1/\alpha_2  \\
	\end{matrix}
	\right) $ \ or \ $\left(
	\begin{matrix}
		1/\alpha_1 & -b \\
		b & 1/\alpha_2  \\
	\end{matrix}
	\right)\ \hbox{ with }\ \alpha_1 = \alpha_2;$
\item[(A.2)] $\left(
	\begin{matrix}
		1/\alpha & 0 \\
		1 & 1/\alpha  \\
	\end{matrix}
	\right)$.
\end{enumerate}}
\vspace{5mm}

\noindent We split the proof of Theorem~\ref{th:ExMultPoints} into two parts, according to the cases (A.1) and (A.2).\\

\begin{proof2a}
Note that, by Theorem~\ref{th:HdmMultPoints}, the Hausdorff dimension of $M_k$ is strictly positive for $\alpha_1
=\alpha_2=2$. Hence, without loss of generality we may assume $\alpha_2<2$. By \cite[(2.1),(2.5),(2.6),(2.7)]{KMX} we have
$$
\displaystyle\Psi(x)\asymp |x_1|^{\alpha_1}+|x_2|^{\alpha_2}\quad\text{ for }\|x\|\geq C.
$$
This implies that condition \eqref{eq:LevExpCond} is equivalent to
$$
\int_{\RR^{2(k-1)}}\frac{1}{1+|\sum_{j=1}^{k-1}x_{j1}|^{\alpha_1}+|\sum_{j=1}^{k-1}x_{j2}|^{\alpha_2}} \
\prod_{j=1}^{k-1}\frac{1}{1+|x_{j1}|^{\alpha_1}+|x_{j2}|^{\alpha_2}} \ d\overline{x}<\infty.
$$
The last integral is finite if and only if
$$
\sum_{m,n\in\NN}\frac{1}{m^{\alpha_1}+n^{\alpha_2}}\idotsint_{A_{k-1}(m,n)}\prod_{i=1}^{k-1}\frac{1}{|x_{i1}|^{\alpha_1}+|x_{i2}|^{\alpha_2}}\,dx_1 \dots dx_{k-1}<\infty,
$$
where the sets $A_{k-1}(m,n)$ were defined at the beginning of Section~\ref{sec3}.
Finally, by Proposition~\ref{th:Ibeta}, the last condition is equivalent to
$$
\sum_{m,n\in\NN}\left(\frac{1}{m^{\alpha_1}+n^{\alpha_2}}\right)^{k-(k-2)(1/\alpha_1+1/\alpha_2)}<\infty,
$$
and it is quite straightforward to see, using similar methods as in Lemma~\ref{lem:series}, that the last series is convergent  
if and only if $k-(k-1)(\alpha^{-1}_1+\alpha^{-1}_2)>0$.
\end{proof2a}

Observe that, since $\alpha_2\leq\alpha_1$, the second term in the dimension formula of Theorem~\ref{th:HdmMultPoints} satisfies
\begin{align*}
2-k\alpha_2\left(\alpha^{-1}_1+\alpha^{-1}_2-1\right)&=1-\frac{\alpha_2}{\alpha_1}+\alpha_2\left(k-(k-1)(\alpha^{-1}_1+\alpha^{-1}_2)\right)\\
&\geq \alpha_2\left(k-(k-1)(\alpha^{-1}_1+\alpha^{-1}_2)\right).
\end{align*}
Hence the condition $k-(k-1)(\alpha^{-1}_1+\alpha^{-1}_2)>0$ is equivalent to $\dimH M_k>0$ a.s. When the stability exponent of 
$X$ satisfies the case (A.1), it follows from Theorem~\ref{th:ExMultPoints} that
$$
M_k\neq\emptyset\text{ a.s.}\iff\dimH M_k>0 \text{ a.s.}
$$
However, as already mentioned in the Introduction, the last equivalence does not hold in the case (A.2), i.e., $X$ may have 
$k$-multiple points even if the Hausdorff dimension of $M_k$ is zero. An essential part of the proof is the following multiple integral estimate. 

\begin{proposition}\label{th:LogEst}
Fix $k\in\NN$ and assume $2(k-1)/k<\alpha<2$. Then for any real numbers $q,r\geq3$  we have	
\begin{equation}\label{eq:LogEst}
\idotsint_{A_{k}(q,r)}\prod_{i=1}^{k}\frac{dx_1 \dots dx_{k}}{(|x_{i1}|+|x_{i2}|\log\|x_i\|)^\alpha}\asymp\frac{1}{[q\vee(r\log r)]^{2-k(2-\alpha)}[\log(q\vee r)]^{k-1}}.
\end{equation}
\end{proposition}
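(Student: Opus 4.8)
The plan is to follow the inductive architecture of Proposition~\ref{th:Ibeta}, but now with the single‑particle weight $w(x):=\bigl(|x_1|+|x_2|\log\|x\|\bigr)^{-\alpha}$, which couples the two coordinates through $\log\|x\|$. Throughout write $M:=q\vee(r\log r)$ and $\ell:=\log(q\vee r)$. The first reduction I would record is the pointwise equivalence
$$
|x_1|+|x_2|\log\|x\|\;\asymp\; q+r\log(q\vee r)\;\asymp\; M \qquad\text{on }A_1(q,r),\ q,r\ge 3,
$$
which simultaneously settles the base case $k=1$: the region $A_1(q,r)$ has bounded area, so its integral is $\asymp M^{-\alpha}$, matching $2-(2-\alpha)=\alpha$ and $[\log(q\vee r)]^{0}=1$. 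As in Proposition~\ref{th:Ibeta} I would then symmetrize to the octant on which the relevant coordinates are positive and split $A_k(q,r)$ into the sixteen regions $A^{i,j}_k(q,r)$, $i,j\in\{1,2,3,4\}$, according to how $x_{k1}$ and $x_{k2}$ sit relative to the partial sums $\sum_{i<k}x_{i1}$ and $\sum_{i<k}x_{i2}$. The dominant piece is $A^{1,1}_k(q,r)$; the remaining fifteen are treated exactly as in the proof of Proposition~\ref{th:Ibeta}, the extra logarithm only shrinking the weights, so I focus on $A^{1,1}_k(q,r)$.

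On $A^{1,1}_k(q,r)$ I would apply the induction hypothesis to the first $k-1$ particles with shifted targets $(q+t,r+s)$, where $(t,s)=(x_{k1},x_{k2})$, reducing the estimate to the two‑dimensional integral
$$
J=\int_1^\infty\!\!\int_1^\infty\frac{ds\,dt}{\left(t+s\log\sqrt{t^2+s^2}\right)^{\alpha}}\cdot\frac{1}{\left[(q+t)\vee\bigl((r+s)\log(r+s)\bigr)\right]^{2-(k-1)(2-\alpha)}\bigl[\log\bigl((q+t)\vee(r+s)\bigr)\bigr]^{k-2}},
$$
and the claim becomes $J\asymp M^{-(2-k(2-\alpha))}\,\ell^{-(k-1)}$.

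The heart of the argument is the evaluation of $J$, and here is the mechanism I would exploit. On the bulk of the domain every logarithm appearing in $J$ --- namely $\log\sqrt{t^2+s^2}$, $\log(r+s)$ and $\log((q+t)\vee(r+s))$ --- is comparable to $\ell$, so after freezing them at this common value the weight becomes $\asymp(t+\ell s)^{-\alpha}$ and the induction density becomes $\asymp\ell^{-(k-2)}\,[(q+t)\vee\ell(r+s)]^{-(2-(k-1)(2-\alpha))}$. Performing the linear stretch $s\mapsto\ell s$ of the (log‑compressed) second axis turns this into the pure power‑law situation $\alpha_1=\alpha_2=\alpha$ of Proposition~\ref{th:Ibeta} with second scale $\ell r$, at the cost of a single Jacobian factor $\ell^{-1}$. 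Thus $J\asymp\ell^{-(k-2)}\cdot\ell^{-1}\cdot[\text{diagonal }A^{1,1}\text{ integral with scales }(q,\ell r)]$, where the bracket is exactly the $(k-1)\to k$ induction step already computed in Proposition~\ref{th:Ibeta} and equals $\asymp[\max(q,\ell r)]^{-(2-k(2-\alpha))}$. Since $\ell r\asymp r\log r$ precisely in the regime where the second coordinate controls $M$, we get $\max(q,\ell r)\asymp M$, and the two sources of logarithmic gain --- the inherited $[\log]^{k-2}$ from the hypothesis and the stretch Jacobian --- combine to $\ell^{-(k-1)}$, as required. The condition $2(k-1)/k<\alpha<2$ guarantees $2-j(2-\alpha)>0$ for every $j\le k$, so all the power‑law integrals converge and the induction closes.

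The main obstacle is precisely the freezing of the logarithms: $\log\|x\|$ genuinely varies and couples the axes, so the stretch $s\mapsto\ell s$ is legitimate only after localizing to scales on which the logarithm is essentially constant. The delicate work is to show that replacing $\log\sqrt{t^2+s^2}$, $\log(r+s)$ and $\log r$ all by the single value $\ell$ produces only errors absorbed by $\asymp$, and in particular to confirm that the integration in $s$ over the log‑compressed coordinate contributes exactly one factor $\ell^{-1}$ per peeled particle --- so that $k$ particles yield $\ell^{-(k-1)}$ rather than $\ell^{0}$ or $\ell^{-k}$. The regions requiring care are the neighborhood of the balance curve $t\asymp s\log s$ and the transition between the regimes $q\ge r\log r$ and $q<r\log r$ determining which coordinate controls $M$; there one must keep track of the logarithms rather than freeze them and check that these boundary layers contribute at the same order, together with the routine verification that each non‑dominant region $A^{i,j}_k(q,r)$ is $O$ of the claimed bound.
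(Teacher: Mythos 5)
Your skeleton (induction on $k$, symmetrization, the sixteen regions $A^{i,j}_k(q,r)$, peeling off the last particle by applying the induction hypothesis at shifted targets, base case $k=1$) coincides with the paper's proof, and your freeze-and-stretch heuristic does predict the correct answer. But there is a genuine gap, and you locate it yourself: the justification of freezing the logarithms is precisely where all of the analytic content of the proposition lives, and your proposal defers it rather than supplies it. On the unbounded domain of $J$ the logarithms are \emph{not} comparable to $\ell:=\log(q\vee r)$; the upper bound requires controlling the tails where $t\vee s$ greatly exceeds $q\vee(r\log r)$, and the lower bound in fact \emph{comes} from such a tail region, not from the bulk. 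The paper resolves this not by freezing but by evaluating the two-dimensional integrals with the logarithms intact: in the inductive step from $k$ to $k'=k+1$, its dominant term $I_4$ reduces to integrals of the form $\int_{r\log r}^\infty\bigl(x+r\log x\bigr)^{-(k'\alpha-2k+1)}(\log x)^{-k}\,dx$, handled via the monotonicity fact $r\log x/x\le 2$ for $x\ge r\log r$, the localization $\int_M^\infty x^{-1-a}(\log x)^{-k}\,dx\asymp M^{-a}(\log M)^{-k}$ with $a=k'\alpha-2k>0$ (exactly your condition $\alpha>2(k-1)/k$), and, for the lower bound, the observation that $(x+r\log x)/(x+x\log x)\le C<1$ on $x\ge r\log r$. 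It is these computations with genuinely varying logarithms --- not the bulk where $\log\asymp\ell$ --- that produce the final factor $[\log(q\vee r)]^{-(k'-1)}$; your plan names this step "the delicate work" but provides no mechanism for it, so the induction does not close as written.

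A second, concrete error: your claim that the fifteen non-dominant regions are "treated exactly as in Proposition~\ref{th:Ibeta}, the extra logarithm only shrinking the weights" would fail. Discarding the logarithm yields, via Proposition~\ref{th:Ibeta} with $\alpha_1=\alpha_2=\alpha$, only the bound $\asymp(q\vee r)^{-(2-k(2-\alpha))}$, which is strictly weaker than the asserted $[q\vee(r\log r)]^{-(2-k(2-\alpha))}[\log(q\vee r)]^{-(k-1)}$ --- and the logarithmic gain is the entire point of the proposition, since it decides the critical case $\alpha=2(k-1)/k$ in Theorem~\ref{th:ExMultPoints}. Accordingly, the paper re-estimates each region with the logarithms carried along: for $A^{2,2}_{k'}(q,r)$ it splits into $J_1,\dots,J_4$, uses $x+y\log y\asymp x+y\log(x+y)$ and the exponent identity $\alpha=2-k(2-\alpha)+(k-1)(2-\alpha)$ to trade logarithms against powers, and even must modify the region's definition slightly (integrating from $3$) so that the logarithm stays bounded away from zero. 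None of this is routine importation from the log-free case; until the tail estimates and the log-bearing bounds on the off-diagonal regions are carried out, your proposal is a correct plan rather than a proof.
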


\begin{proof}
As in the proof of Proposition \ref{th:Ibeta}, we restrict the integration in \eqref{eq:LogEst} to the set
\begin{align*}
\widetilde A_k(q,r):=&\biggl\{(x_1,...,x_k)\in\RR^{2k}:x_{k1}, x_{k2}\geq1, |x_{i1}|, |x_{i2}|\geq 1,i=1,...,k-1,\Bigl.\\
&\qquad \Bigl.q-1\leq \Big|\sum_{i=1}^kx_{i1}\Big|<q, \ r-1\leq \Big|\sum_{i=1}^kx_{i2}\Big|<r\biggl\},
\end{align*}
and we employ the sets $A_k^{i,j}(q,r)$ for $i,j=1,2,3,4$. Recall that
$$
A_k^{1,1}(q,r)\subseteq\widetilde A_k(q,r)\subseteq \bigcup_{i,j=1}^4 A_k^{i,j}(q,r),
$$
so it is enough to show the lower bound of \eqref{eq:LogEst} with the integration restricted to $A_k^{1,1}(q,r)$ and the upper bound 
of \eqref{eq:LogEst} with the integration over all $A_k^{i,j}(q,r)$. We proceed by induction on $k$. The case $k=1$ is obvious, as
$$
\iint_{A_{1}(q,r)}\frac{dx_1dx_2}{(|x_1|+|x_{2}|\log\|x\|)^\alpha}\asymp\left[q+r\log(q+r)\right]^{-\alpha}\asymp[q\vee(r\log r)]^{-\alpha}.
$$
Assume \eqref{eq:LogEst} holds for some $k\geq1$ and set $k':=k+1$. Consider first the integration over $A_{k'}^{1,1}(q,r)$. 
By the induction hypothesis applied to $A_{k}(q+x_{k'1},r+x_{k'2})$, we get
\begin{alignat*}{2}
&\idotsint_{A_{k'}^{1,1}(q,r)}\prod_{i=1}^{k'}\frac{dx_1 \dots dx_{k'}}{(|x_{i1}|+|x_{i2}|\log\|x_i\|)^\alpha}  \\
&\asymp \int_1^\infty\int_1^\infty\left(\frac{1}{q+x_{k'1}+(r+x_{k'2})\log(r+x_{k'2})}\right)^{2-k(2-\alpha)}\left(\frac{1}{\log(q+x_{k'1}+r+x_{k'2})}\right)^{k-1}\\
&\qquad \qquad \qquad \qquad \qquad \qquad \qquad \qquad \qquad \qquad \qquad \quad 
\times\frac{dx_{k'1}dx_{k'2}}{(x_{k'1}+x_{k'2}\log\|x_{k'}\|)^\alpha}.
\end{alignat*}
We split the right-hand side into four integrals
\begin{align*}
\int_1^\infty\int_1^\infty(\dots)&=\int_1^r\int_1^q(\dots)+\int_1^r\int_q^\infty(\dots)+\int_r^\infty\int_1^q(\dots)+\int_r^\infty\int_q^\infty(\dots)\\
&=I_1+I_2+I_3+I_4.
\end{align*}
Consider $I_4$ first. We have
\begin{align}\label{est:I4rq}
I_4\asymp\int_r^\infty\int_q^\infty\frac{dxdy}{(x+y\log y)^{2-k(2-\alpha)}[\log(x+y)]^{k-1}\left(x+y\log(x+y)\right)^\alpha}.
\end{align}
Assume $q<r\log r$. By Fubini,
\begin{align*}
I_4&\gtrsim\int_{r\log r}^\infty\int_r^x\frac{dydx}{(x+y\log y)^{2-k(2-\alpha)}(\log x)^{k-1}\left(x+y\log x\right)^\alpha}\\
&\geq\int_{r\log r}^\infty\int_r^x\frac{dydx}{(x+y\log x)^{k'\alpha-2k+2}(\log x)^{k-1}}\\
&=C\int_{r\log r}^\infty\frac{1}{(x+r\log x)^{k'\alpha-2k+1}(\log x)^{k}}\left[1-\left(\frac{x+r\log x}{x+x\log x}\right)^{k'\alpha-2k+1}\right]dx.
\end{align*}
Note that the condition $2(k'-1)/k'<\alpha$ implies $k'\alpha-2k>0$. Also, for $x\geq r\log r$ we have
$$
\frac{x+r\log x}{x+x\log x}\leq C<1.
$$
Hence
$$
I_4\gtrsim\int_{r\log r}^\infty\frac{dx}{(x+r\log x)^{k'\alpha-2k+1}(\log x)^{k}}.
$$
Furthermore, since $x^{-1}\log x$ is decreasing, for $x\geq r\log r$ we get
$$
\frac{r\log x}{x}\leq\frac{r\log(r\log r)}{r\log r}=\frac{\log r+\log(\log r)}{\log r}\leq2.
$$
Therefore
\begin{align*}
&\int_{r\log r}^\infty\frac{dx}{(x+r\log x)^{k'\alpha-2k+1}(\log x)^{k}}\asymp\int_{r\log r}^\infty\frac{dx}{x^{k'\alpha-2k+1}(\log x)^{k}}\\
&\asymp\frac{1}{(r\log r)^{k'\alpha-2k}[\log(r\log r)]^{k}}\asymp\frac{1}{(r\log r)^{2-k'(2-\alpha)}(\log r)^{k'-1}}.
\end{align*}
Let $q\geq r\log r$. \tl{A similar estimate} as above gives
\begin{align*}
I_4\gtrsim\int_{q}^\infty\int_r^x\frac{dydx}{(x+y\log x)^{k'\alpha-2k+2}(\log x)^{k-1}}&\asymp\int_{q}^\infty\frac{dx}{(x+r\log x)^{k'\alpha-2k+1}(\log x)^{k}}\\
&\asymp\frac{1}{q^{2-k'(2-\alpha)}(\log q)^{k'-1}}.
\end{align*}
Altogether,
$$
I_4\gtrsim\frac{1}{[q\vee(r\log r)]^{2-k'(2-\alpha)}[\log(q\vee r)]^{k'-1}}.
$$
This also proves the lower bound of \eqref{eq:LogEst}. It remains to show the upper bound. By \eqref{est:I4rq} we have
\begin{align*}
I_4&\lesssim\int_q^\infty\int_r^\infty\frac{dydx}{x^{2-k(2-\alpha)}(\log x)^{k-1}\left(x+y\log x\right)^\alpha}\\
&=C\int_q^\infty\frac{dx}{x^{2-k(2-\alpha)}(\log x)^{k}\left(x+r\log x\right)^{\alpha-1}}\\
&\lesssim\int_q^\infty\frac{dx}{x^{k'\alpha-2k+1}(\log x)^{k}}\asymp\frac{1}{q^{2-k'(2-\alpha)}(\log q)^{k'-1}}.
\end{align*}
On the other hand,
\begin{align*}
I_4&\lesssim\int_r^\infty\int_q^\infty\frac{dxdy}{(x+y\log y)^{k'\alpha-2k+2}(\log y)^{k-1}}
=C\int_r^\infty\frac{dy}{(q+y\log y)^{k'\alpha-2k+1}(\log y)^{k-1}}\\
&\lesssim\int_r^\infty\frac{dy}{y^{k'\alpha-2k+1}(\log y)^{k'\alpha-k}}\asymp\frac{1}{r^{k'\alpha-2k}(\log r)^{k'\alpha-k}}
=\frac{1}{(r\log r)^{2-k'(2-\alpha)}(\log r)^{k'-1}}.
\end{align*}
By taking the minimum of two upper bounds in terms of $q$ and $r$ we get
$$
I_4\lesssim\frac{1}{[q\vee(r\log r)]^{2-k'(2-\alpha)}[\log(q\vee r)]^{k'-1}}.
$$

Next we consider $I_1$. Notice that
\begin{align*}
I_1\asymp\frac{1}{[q\vee(r\log r)]^{2-k(2-\alpha)}[\log(q\vee r)]^{k-1}}\int_1^r\int_1^q\frac{dxdy}{\left[x+y\log(x+y)\right]^\alpha}.
\end{align*}
Since $\alpha>1$, we get by Fubini's theorem 
\begin{align*}
\int_1^r\int_1^q\frac{dxdy}{\left[x+y\log(x+y)\right]^\alpha}\lesssim\int_1^q\int_1^r\frac{dydx}{\left(x+y\log x\right)^\alpha}
\lesssim\int_1^q\frac{dx}{x^{\alpha-1}\log x}\lesssim\frac{q^{2-\alpha}}{\log q}.
\end{align*}
Hence $I_1\lesssim q^{k'(2-\alpha)-2}(\log q)^{1-k'}$. In a similar manner,
\begin{align*}
\int_1^r\int_1^q\frac{dxdy}{\left[x+y\log(x+y)\right]^\alpha}\lesssim\int_1^r\int_1^q\frac{dxdy}{\left(x+y\log y\right)^\alpha}
\lesssim\int_1^r\frac{dy}{(y\log y)^{\alpha-1}}\lesssim\frac{r^{2-\alpha}}{(\log r)^{\alpha-1}}.
\end{align*}
This gives $I_1\lesssim (r\log r)^{k'(2-\alpha)-2}(\log r)^{1-k'}$,
and the desired upper bound for $I_1$ follows. We have
\begin{align*}
I_2&\asymp\int_1^r\int_q^\infty\frac{dxdy}{[x+(r\log r)]^{2-k(2-\alpha)}[\log(x+r)]^{k-1}\left[x+y\log(x+y)\right]^\alpha}\\
&\lesssim\frac{1}{(r\log r)^{2-k(2-\alpha)}(\log r)^{k-1}}\int_1^r\int_q^\infty\frac{dxdy}{\left[x+y\log(2y)\right]^\alpha}\\
&\lesssim\frac{1}{(r\log r)^{2-k(2-\alpha)}(\log r)^{k-1}}\int_1^r\frac{dy}{\left(y\log(2y)\right)^{\alpha-1}}\\
&\lesssim\frac{1}{(r\log r)^{2-k(2-\alpha)}(\log r)^{k-1}}\cdot\frac{r^{2-\alpha}}{\left(\log r\right)^{\alpha-1}}
=\frac{1}{(r\log r)^{2-k'(2-\alpha)}(\log r)^{k'-1}}.
\end{align*}
Furthermore, by Fubini we obtain
\begin{align*}
I_2\leq\frac{1}{(\log q)^{k-1}}\int_q^\infty\int_1^r\frac{dydx}{x^{2-k(2-\alpha)}\left(x+y\log x\right)^\alpha}&\lesssim\frac{1}{(\log q)^{k-1}}\int_q^\infty\frac{dx}{x^{\alpha-k(2-\alpha)+1}\log x}\\
&\asymp\frac{1}{q^{2-k'(2-\alpha)}(\log q)^{k'-1}}.
\end{align*}
This gives the upper bound for $I_2$. Estimating $I_3$ is similar. Therefore we have proved the upper bound for the integral 
in \eqref{eq:LogEst}  over $A_{k'}^{1,1}(q,r)$.

Consider the integration over $A_{k'}^{2,2}(q,r)$. Note that we need to modify slightly the definition of the latter as the 
argument of the logarithm on the left-hand side of \eqref{eq:LogEst} does not approach 1. By the induction hypothesis applied to $A_{k}(q-x_{k'1},r-x_{k'2})$ we have
\begin{alignat*}{2}
&\idotsint_{A_{k'}^{2,2}(q,r)}\prod_{i=1}^{k'}\frac{dx_1 \dots dx_{k'}}{(|x_{i1}|+|x_{i2}|\log\|x_i\|)^\alpha}  \\
&\asymp \int_3^{r-3}\int_3^{q-3}\left(\frac{1}{q-x+(r-y)\log(r-y)}\right)^{2-k(2-\alpha)}&\left(\frac{1}{\log(q-x+r-y)}\right)^{k-1}\\
&&\times\frac{dxdy}{[x+y\log(x+y)]^\alpha}.
\end{alignat*}
As in the previous case, we split the right-hand side into
\begin{align*}
&\int_3^{r/2}\int_3^{q/2}(\dots)+\int_{r/2}^{r-3}\int_{q/2}^{q-3}(\dots)+\int_{3}^{r/2}\int_{q/2}^{q-3}(\dots)+\int_{r/2}^{r-3}\int_3^{q/2}(\dots)\\
&=J_1+J_2+J_3+J_4.
\end{align*}
We have
\begin{align*}
J_1\asymp\frac{1}{[q\vee(r\log r)]^{2-k(2-\alpha)}[\log(q\vee r)]^{k-1}}\int_3^{r/2}\int_3^{q/2}\frac{dxdy}{\left[x+y\log(x+y)\right]^\alpha}\asymp I_1,
\end{align*}
and the desired upper bound follows from the estimate of $I_1$. Furthermore,
\begin{align*}
J_2\asymp\frac{1}{[q\vee(r\log r)]^\alpha}\int_3^{r/2}\int_3^{q/2}\frac{dxdy}{(x+y\log y)^{2-k(2-\alpha)}[\log(x+y)]^{k-1}}.
\end{align*}
As $\alpha=2-k(2-\alpha)+(k-1)(2-\alpha)$, we get
\begin{align*}
J_2\lesssim\frac{1}{[q\vee(r\log r)]^{2-k(2-\alpha)}}\int_3^{r/2}\int_3^{q/2}\frac{dxdy}{(x+y\log y)^\alpha[\log(x+y)]^{k-1}}.
\end{align*}
Note that $x+y\log y\asymp x+y\log(x+y)$. Using similar arguments as for $I_1$ we get
\begin{align*}
\int_3^{r/2}\int_3^{q/2}\frac{dxdy}{[x+y\log(x+y)]^\alpha[\log(x+y)]^{k-1}}\lesssim \frac{q^{2-\alpha}}{(\log q)^k}\wedge\frac{r^{2-\alpha}}{(\log r)^{\alpha+k-2}}.
\end{align*}
This gives the upper bound for $J_2$. Since the cases of $J_3$ and $J_4$ are very similar to each other, we consider 
only the first one. For the latter one gets
\begin{align*}
J_3&\asymp\int_3^{r/2}\int_{q/2}^{q-3}\frac{dxdy}{(q-x+r\log r)^{2-k(2-\alpha)}[\log(q-x+r)]^{k-1}[x+y\log(x+y)]^\alpha}\\
&=\int_3^{r/2}\int_3^{q/2}\frac{dxdy}{(x+r\log r)^{2-k(2-\alpha)}[\log(x+r)]^{k-1}[q-x+y\log(q-x+y)]^\alpha}\\
&\asymp\int_3^{r/2}\int_3^{q/2}\frac{dxdy}{(x+r\log r)^{2-k(2-\alpha)}[\log(x+r)]^{k-1}[q+y\log(q+y)]^\alpha}.
\end{align*}
Hence we have
\begin{align*}
J_3&\lesssim\frac{1}{(r\log r)^{2-k(2-\alpha)}(\log r)^{k-1}}\int_3^{r/2}\int_3^{q/2}\frac{dxdy}{[x+y\log(x+y)]^\alpha},
\end{align*}
and applying \tl{the} previous estimate for $I_1$ we get the desired upper bound in terms of $r$. In order to estimate 
$J_3$ in terms of $q$ we can assume $q\geq r\log r$. We write
\begin{align*}
J_3&\asymp\int_3^{r/2}\int_3^{q/2}\frac{(x+r\log r)^{(k-1)(2-\alpha)}dxdy}{(x+r\log r)^{\alpha}[\log(x+r)]^{k-1}[q+y\log(q+y)]^\alpha}.
\end{align*}
As $q+y\log(q+y)\asymp q$ for $y\leq r$ we get
\begin{align*}
J_3&\lesssim q^{k(2-\alpha)-2}\int_3^{r/2}\int_3^{q/2}\frac{dxdy}{(x+r\log r)^{\alpha}(\log x)^{k-1}}\\
&\lesssim q^{k(2-\alpha)-2}\int_3^{r/2}\int_3^{q/2}\frac{dxdy}{[x+y\log(x+y)]^{\alpha}(\log x)^{k-1}},
\end{align*}
and since the last integral has already appeared in the case of $J_2$, the upper bound for $J_3$ is 
proved. This also completes the upper estimate with integration over $A_{k'}^{2,2}(q,r)$. The remaining 
integrals can be estimated in a similar way or reduced to the cases already considered by splitting the 
domain of integration in a suitable way. We omit the details.
\end{proof}

\vspace{5mm}

\begin{proof2b}
As in the proof for the case (A.1), we may assume without loss of generality that $\alpha<2$.
By \cite[(2.1),(2.5),(2.6),(2.7)]{KMX}, we have
$$
\displaystyle\Psi(x)\asymp |x_1|^{\alpha}+|x_2|^{\alpha}(\ln\|x\|)^{\alpha}\quad\text{ for }\|x\|\geq C,
$$
with $\alpha:=\alpha_1=\alpha_2=1/a$. In order to apply this estimate to \eqref{eq:LevExpCond} we split 
the domain of integration into $\|\sum_{j=1}^{k-1}x_{j}\|\leq2$ and $\|\sum_{j=1}^{k-1}x_{j}\|\geq2$. Note that the 
condition $\|\sum_{j=1}^{k-1}x_{j}\|\leq2$ implies
$$
|x_{(k-1)1}|\asymp\Big|\sum_{j=1}^{k-2}x_{j1}\Big|\quad\text{ and }\quad|x_{(k-1)2}|\asymp\Big|\sum_{j=1}^{k-2}
x_{j2}\Big|\quad\text{ for }\|x_{k-1}\|\ge C.
$$
This means that
$$
\int_{\RR^{d(k-1)}}\frac{1}{1+\Psi(\sum_{j=1}^{k-1}x_j)} \ \mathds{1}_{\{\|\sum_{j=1}^{k-1}x_{j}\|\leq2\}} \,
\prod_{j=1}^{k-1}\frac{1}{1+\Psi(x_j)} \ d\overline{x}<\infty
$$
if and only if
$$
\int_{\RR^{d(k-2)}}\frac{1}{1+\Psi(\sum_{j=1}^{k-2}x_j)} \
\prod_{j=1}^{k-2}\frac{1}{1+\Psi(x_j)} \ d\overline{x}<\infty.
$$
Since the last condition is equivalent to $M_{k-1}\neq\emptyset$, it is enough to consider the case
$\|\sum_{j=1}^{k-1}x_{j}\|\geq2$, and the final conclusion will follow by induction. The estimates of $\Psi$ 
imply that
$$
\int_{\RR^{d(k-1)}}\frac{1}{1+\Psi(\sum_{j=1}^{k-1}x_j)} \ \mathds{1}_{\{\|\sum_{j=1}^{k-1}x_{j}\|\geq2\}} \,
\prod_{j=1}^{k-1}\frac{1}{1+\Psi(x_j)} \ d\overline{x}<\infty
$$
if and only if
\begin{align*}
\int_{\RR^{2(k-1)}}&\frac{1}{1+|\sum_{j=1}^{k-1}x_{j1}|^{\alpha}+|\sum_{j=1}^{k-1}x_{j2}|^{\alpha}(\log\|\sum_{j=1}^{k-1}x_{j}\|)^{\alpha}} \\
&\times\mathds{1}_{\{\|\sum_{j=1}^{k-1}x_{j}\|\geq2\}} \ \prod_{j=1}^{k-1}\frac{1}{1+|x_{j1}|^{\alpha}+|x_{j2}|^{\alpha}(\log\|x_{j}\|)^\alpha} 
\, d\overline{x}<\infty.
\end{align*}
Employing the sets $A_{k-1}(m,n)$ as in the proof for the case (A.1), we conclude that the last condition is equivalent to
\begin{equation}\label{eq:SeriesLogInt}
\sum_{m,n\geq3}\left(\frac{1}{m+n\log(m+n)}\right)^\alpha\idotsint_{A_{k-1}(m,n)}\prod_{i=1}^{k-1}\frac{dx_1 \dots dx_{k-1}}{(|x_{i1}|+|x_{i2}|\log\|x_i\|)^\alpha}<\infty.
\end{equation}
Finally, Proposition ~\ref{th:LogEst} with $k-1$ instead of $k$ implies that \eqref{eq:SeriesLogInt} holds if and only if
\begin{equation}\label{ineq:LogSeries}
\sum_{m,n\geq3}\left(\frac{1}{m+n\log(m+n)}\right)^\alpha\frac{1}{[m\vee(n\log n)]^{2-(k-1)(2-\alpha)}[\log(m\vee n)]^{k-2}}<\infty.
\end{equation}
As already argued in the proof of Proposition~\ref{th:LogEst}, we have $m+n\log(m+n)\asymp m\vee (n\log n)$. Split the series 
in \eqref{ineq:LogSeries} into two parts
$$
\tl{\sum_{m,n\geq3}}(\dots)=\sum_{m\leq n\log n}(\dots)+\sum_{m> n\log n}(\dots)=S_1+S_2.
$$
We get
$$
S_1\asymp\sum_{m\leq n\log n}\frac{1}{n^{k\alpha-2k+4}(\log n)^{k\alpha-k+2}}\asymp\sum_{n\geq3}\frac{1}{n^{k\alpha-2k+3}(\log n)^{k\alpha-k+1}}.
$$
Thus $S_1<\infty$ if and only if $\alpha\geq 2(k-1)/k$. Furthermore,
$$
S_2\leq\sum_{m>n\log n}\frac{1}{m^{k\alpha-2k+4}(\log m)^{k-2}}.
$$
For $\alpha\geq 2(k-1)/k$ we have
\begin{align*}
\int_3^\infty\int_{x\log x}^\infty\frac{dydx}{y^{k\alpha-2k+4}(\log y)^{k-2}}&\asymp\int_3^\infty\frac{dx}{(x\log x)^{k\alpha-2k+3}[\log (x\log x)]^{k-2}}\\
&\asymp\int_3^\infty\frac{dx}{x^{k\alpha-2k+3}(\log x)^{k\alpha-k+1}}<\infty,
\end{align*}
and so $S_2<\infty$. Hence $S_1+S_2<\infty$ if and only if $\alpha\geq 2(k-1)/k$, as desired.
\end{proof2b}

\vspace{5mm}

We finish this section by characterizing the existence of double points of $X$. According to the Jordan 
decomposition for $d=3$, the stability exponent of $X$ satisfies $B=PDP^{-1}$, where the matrix $D$ may have 
one of the the following forms:\tl{
\begin{enumerate}
\item[(B.1)] $\left(
	\begin{matrix}
		1/\alpha_1 & 0 & 0 \\
		0 & 1/\alpha_2 & 0 \\
		0 & 0   & 1/\alpha_3 \\
	\end{matrix}
	\right) $, \ 
	$\left(
	\begin{matrix}
		1/\alpha_1 & -b & 0 \\
		b & 1/\alpha_2 & 0 \\
		0 & 0   & 1/\alpha_3 \\
	\end{matrix}
	\right)$ \ with  $\alpha_1= \alpha_2$ \\\\
\item[\ ] or 
	$\left(
	\begin{matrix}
		1/\alpha_1 & 0 & 0 \\
		0 & 1/\alpha_2 & -b \\
		0 & b   & 1/\alpha_3 \\
	\end{matrix}
	\right)$\, with  $\alpha_2= \alpha_3$;  \\\\
\item[(B.2)] $\left(
	\begin{matrix}
		1/\alpha_1 & 0 & 0 \\
		1 & 1/\alpha_2 & 0 \\
		0 & 0   & 1/\alpha_3 \\
	\end{matrix}
	\right)  \hbox{ with } \alpha_1= \alpha_2$ 
	or $\left(
	\begin{matrix}
		1/\alpha_1 & 0 & 0 \\
		0 & 1/\alpha_2 & 0 \\
		0 & 1   & 1/\alpha_3 \\
	\end{matrix}
	\right) \hbox{ with } \alpha_2= \alpha_3;$\\\\
	\item[(B.3)] $\left(
	\begin{matrix}
		1/\alpha & 0 & 0 \\
		1 & 1/\alpha & 0 \\
		0 & 1   & 1/\alpha \\
	\end{matrix}
	\right). $
\end{enumerate}
Clearly, in the case (B.3) we have $\alpha:=\alpha_1=\alpha_2=\alpha_3$.}
\begin{corollary}\label{cor:ExDouble}
Let $X = \{X(t), t \in \mathbb R_+\}$ be a symmetric operator semistable L\'evy process in $\RR^d$. A necessary and sufficient 
condition for the existence of double points of $X$ is the following:
\begin{itemize}
\item $d=2$: $M_2 \ne \emptyset$ \tl{almost surely}  if and only if $2-1/\alpha_1-1/\alpha_2>0$.
\item $d=3$, Cases (B.1) and (B.2): $M_2 \ne \emptyset$ \tl{almost surely} if and only if $\ 2-1/\alpha_1-1/\alpha_2-1/\alpha_3>0$.
\item $d=3$, Case (B.3): $M_2 \ne \emptyset$ \tl{almost surely} if and only if $\alpha\geq 3/2$.
\end{itemize}
Furthermore, $M_2=\emptyset$ almost surely for $d\geq4$.
\end{corollary}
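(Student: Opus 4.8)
The plan is to reduce the existence of a double point to a single integrability condition and then evaluate the resulting integral in each Jordan case. Specializing the intersection criterion \eqref{eq:LevExpCond} to $k=2$ --- which is legitimate by the discussion at the beginning of Section~\ref{sec4} --- the existence of double points is equivalent to
$$
\int_{\RR^d}\frac{d\xi}{(1+\Psi(\xi))^2}<\infty,
$$
and only the behaviour of the integrand for large $\|\xi\|$ matters, since $(1+\Psi)^{-2}$ is bounded. The case $d\ge4$ was already disposed of in the main text, so it remains to treat $d=2$ and $d=3$. For this I would insert the sharp asymptotics of $\Psi$ from \cite[(2.1),(2.5),(2.6),(2.7)]{KMX}, exactly as in the proofs of Theorem~\ref{th:ExMultPoints}: a semisimple part (diagonal or rotation block) contributes $\Psi(\xi)\asymp\sum_j|\xi_j|^{\alpha_j}$, whereas a nilpotent Jordan block of size $m$ attached to $1/\alpha$ introduces the successive logarithmic weights $(\log\|\xi\|)^{0},(\log\|\xi\|)^{\alpha},\dots,(\log\|\xi\|)^{(m-1)\alpha}$ along its chain.

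In the purely semisimple cases (A.1) and (B.1) the computation is routine: the substitution $u_j=|\xi_j|^{\alpha_j}$ turns the integral into $\int\prod_j u_j^{1/\alpha_j-1}(u_1+\dots+u_d)^{-2}\,du$ over $[1,\infty)^d$, whose scaling degree is $\sum_j1/\alpha_j-2$, so it converges precisely when $\sum_j1/\alpha_j<2$ and diverges logarithmically at the boundary $\sum_j1/\alpha_j=2$. This gives the stated strict thresholds for $d=2$ (A.1) and $d=3$ (B.1).

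For the nilpotent cases (A.2), (B.2) and (B.3) I would carry out the analogous substitution after first straightening the logarithmic weights: decomposing $\{\|\xi\|\asymp2^n\}$ into dyadic shells, on which $\log\|\xi\|$ is essentially constant, and setting $u_j=(|\xi_j|(\log\|\xi\|)^{e_j})^{\alpha_j}$, where $e_j\in\{0,1,\dots,m_b-1\}$ records the position of $\xi_j$ in its block, converts the integral on each shell into $\int(\log\|\xi\|)^{-p}\prod_j u_j^{1/\alpha_j-1}(u_1+\dots+u_d)^{-2}\,du$ with the fixed weight exponent $p=\sum_b\binom{m_b}{2}$ coming from the Jacobian. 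Away from the boundary the factor $(\log)^{-p}$ is irrelevant and the threshold is again $\sum_j1/\alpha_j<2$; the decisive point is the boundary itself, where the $u$-integral is only marginally (logarithmically) divergent and everything hinges on $p$. Summing the shells reduces the question to the convergence of $\int^\infty(\log R)^{-p}\,d(\log R)$, which holds if and only if $p>1$. Since $p=1$ for the single off-diagonal blocks in (A.2) and (B.2) but $p=\binom{3}{2}=3$ for the full $3\times3$ block in (B.3), this is precisely the mechanism that excludes the boundary in (A.2) and (B.2), giving the strict thresholds $\alpha>1$ and $\sum_j1/\alpha_j<2$, while including it in (B.3), giving the non-strict threshold $\alpha\ge3/2$ (at which $\sum_j1/\alpha_j=2$). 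I expect the main obstacle to be making the dyadic-shell straightening rigorous --- controlling the error from treating $\log\|\xi\|$ as constant on each shell and resumming the shell contributions --- together with verifying the competing logarithmic powers sharply enough to pin down the borderline value $\alpha=3/2$. The divergence for $\alpha<3/2$ follows from the same computation, and collecting the five cases with the $d\ge4$ statement completes the proof.
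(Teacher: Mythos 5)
Your proposal is correct in substance, but it takes a genuinely different route from the paper: the paper's own proof of Corollary~\ref{cor:ExDouble} is essentially a citation. It observes that the corollary is a reformulation of Theorems 5 and 7 of \cite{LX}, whose proofs rest on the intersection criterion \eqref{eq:LevExpCond} together with the asymptotics of the L\'evy exponent for operator \emph{stable} processes from \cite{MX}; since \cite{KMX} provides the analogous asymptotics in the semistable case, the argument of \cite{LX} transfers verbatim, and the $d\ge4$ claim is the one proved directly at the start of Section~\ref{sec4}. You instead reconstruct the underlying computation from scratch. Your reduction of $M_2\neq\emptyset$ a.s.\ to $\int_{\RR^d}(1+\Psi(\xi))^{-2}\,d\xi<\infty$ is exactly the specialization of \eqref{eq:LevExpCond} that the paper records; your reading of the \cite{KMX} asymptotics is right, including the point stressed in the remark immediately following the corollary that rotation blocks are diagonalizable over $\mathbb{C}$ and hence contribute no logarithms (a detail misstated in \cite{LX} itself, which you handle correctly); and your unified mechanism --- the Jacobian weight $p=\sum_b\binom{m_b}{2}$ from straightening the logarithmic chain on dyadic shells, with the critical surface $\sum_j 1/\alpha_j=2$ included if and only if $p>1$ --- correctly gives $p=1$ in cases (A.2) and (B.2) versus $p=\binom{3}{2}=3$ in case (B.3), hence the strict thresholds in the former and the non-strict threshold $\alpha\ge3/2$ in the latter. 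What each approach buys: the paper's route is short and safe but opaque, while yours is self-contained and explains structurally \emph{why} only the full $3\times3$ Jordan block admits double points at criticality, in the same spirit as the direct computations the paper does carry out for $k\ge3$ in Propositions~\ref{th:Ibeta} and~\ref{th:LogEst}.

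One point of your sketch needs care, as you yourself flag: the image of a Euclidean shell $\|\xi\|\asymp R$ under $u_j=\xi_j(\log R)^{e_j}$ spans about $\log\log R$ anisotropic dyadic scales, so at the boundary the per-shell $u$-integral is constant only up to factors of size $\log\log R$ (for instance, an upper bound of order $(\log R)^{-3}\log\log R$ per shell in case (B.3)). One must check --- and it is true --- that these spurious factors are summable when $p=3$, and that in the $p=1$ cases the per-shell contribution is bounded \emph{below} by a genuine constant times $(\log R)^{-1}$, so that the dichotomy $\sum_n n^{-p}(\log n)^{O(1)}$ convergent versus $\sum_n n^{-1}$ divergent is unaffected. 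With that verification supplied, your argument closes all five cases and, combined with the $d\ge4$ computation from the main text, yields the corollary.
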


\begin{proof}
The corollary is a reformulation of \cite[Theorem 5 and Theorem 7]{LX}, where the result was proved for symmetric operator 
stable L\'evy processes. The proof was based on \eqref{eq:LevExpCond} and on the asymptotics of the characteristic exponent 
derived in \cite[(4.9),(4.14),(4.15),(4.16)]{MX}. Analogous asymptotics for operator semistable L\'evy processes were obtained 
in \cite[(2.1),(2.5),(2.6),(2.7)]{KMX}. Therefore, the proof is the same as in \cite{LX}. The last statement is justified at the 
beginning of this section.
\end{proof}

\tl{
\begin{remark}
We note that the classification of the stability exponent $B$ and the corresponding asymptotics of the L\'evy exponent at 
infinity in \cite[Proofs of Theorems 5 and 7]{LX} was not quite correct. In dimension 2 and $B=PDP^{-1}$ with
$$
D=\left(
	\begin{matrix}
		1/\alpha_1 & -b \\
		b & 1/\alpha_2  \\
	\end{matrix}
	\right) \ \hbox{ and }\, \alpha_1=\alpha_2, 
$$
we remark that since the matrix is diagonalizable over the complex numbers, it follows 
from \cite[(2.1),(2.5),(2.6),(2.7)]{KMX} that
$$
\displaystyle\Psi(x)\asymp |x_1|^{\alpha_1}+|x_2|^{\alpha_2}\quad\text{ for }\|x\|\geq C.
$$
Similarly, in dimension 3 and $B=PDP^{-1}$ with
$$
D=\left(
	\begin{matrix}
		1/\alpha_1 & -b & 0 \\
		b & 1/\alpha_2 & 0 \\
		0 & 0   & 1/\alpha_3 \\
	\end{matrix}
	\right) \ \hbox{ and } \alpha_1=\alpha_2,
	$$
 or 
$$ D=\left(
	\begin{matrix}
		1/\alpha_1 & 0 & 0 \\
		0 & 1/\alpha_2 & -b \\
		0 & b   & 1/\alpha_3 \\
	\end{matrix}
	\right) \ \hbox{ and  } \alpha_2=\alpha_3,
$$
it follows from \cite[(2.1),(2.5),(2.6),(2.7)]{KMX} that
$$
\displaystyle\Psi(x)\asymp |x_1|^{\alpha_1}+|x_2|^{\alpha_2}+|x_3|^{\alpha_3}\quad\text{ for }\|x\|\geq C.
$$
This, however, does not change the statements of \cite[Theorems 5 and 7]{LX}.
\end{remark}
}

\tl{For completeness, we end this paper with the following remark on multiple points of a symmetric 
semistable L\'evy process in $\RR$.
\begin{remark} \label{Re:last}
The existence and Hausdorff dimension of multiple points for symmetric stable L\'evy processes in $\RR$ 
have been fully characterized in \cite{T}. If $X$ is a real-valued symmetric semistable L\'evy process with characteristic 
exponent $\Psi(\xi)$, the existence of multiple points and the Hausdorff dimension of $M_k$ can be explicitly determined 
by applying (\ref{eq:LevExpCond}) and (\ref{eq:HdmGeneral}), which only depend on the asymptotic behavior of 
$\Psi(\xi)$ at $\infty$. It follows from Choi \cite[Remark 2]{Ch} that the characteristic exponent of $X$ is of the form
$\Psi(\xi) = |\xi|^\alpha R(\xi)$, where $0 < \alpha < 2$ is a constant and $R(\xi)$ is a non-negative bounded and 
continuous function on $\RR$. Hence, if 
\begin{equation}\label{Eq:R}
\inf\{R(\xi): |\xi|\ge 1\} > 0,
\end{equation}
(by \cite[Remark 2]{Ch} this holds if the support of the L\'evy measure of 
$X$ spans $\RR$), then  $\Psi(\xi)\asymp|\xi|^\alpha$ for $\xi \in \RR$ with $|\xi|\ge 1.$ 
In view of the formula \eqref{eq:HdmGeneral} and the condition \eqref{eq:LevExpCond},  we can verify that under 
(\ref{Eq:R}) the characterizations of multiple points of $X$ are the same as those in \cite[Theorems 1 and 3]{T} for 
a symmetric $\alpha$-stable L\'evy process in $\RR$.
\end{remark}
}


\begin{thebibliography}{HD}
\bibitem{B96}  J. Bertoin,
\newblock  L\'evy Processes,
\newblock Cambridge Tracts in Mathematics, Cambridge, 1996.


\bibitem{Ch}  G.S. Choi,
\newblock \textit{Criteria for recurrence and transience of semistable processes},
\newblock Nagoya Math. J. 134 (1994), 91--106.


\bibitem{C}  V. Chorny,
\newblock \textit{Operator semistable distributions on $\RR^d$},
\newblock Theory Probab. Appl. 31 (1987), 703--709.


\bibitem{DEK1} A. Dvoretzky, P. Erd\"os, S. Kakutani,
\newblock {\em Double points of paths of Brownian motion in $n$-space},
\newblock  Acta Sci. Math. 12 (1950), 75--81.


\bibitem{DEK2} A. Dvoretzky, P. Erd\"os, S. Kakutani,
\newblock {\em Multiple points of paths of Brownian motion in the plane},
\newblock Bull. Res. Council Israel, Sect. F 3 (1954), 364--371.


\bibitem{DEKT} A. Dvoretzky, P. Erd\"os, S. Kakutani, S.J. Taylor,
\newblock {\em Triple points of Brownian motion in 3-space},
\newblock Proc. Cambridge Philos. Soc. 53 (1957), 856--862.


\bibitem{Dy} E.B. Dynkin,
\newblock {\em Random fields associated with multiple points of the Brownian motion},
\newblock J. Funct. Anal. 62 (1985), 397--434.


\bibitem{E} S.N. Evans,
\newblock {\em Multiple points in the sample paths of a L\'evy process},
\newblock  Probab. Theory Relat. Fields 76 (1987), 359--367.

\bibitem{Frist}
B. Fristedt,  
\newblock {\em An extension of a theorem of S. J. Taylor concerning the multiple 
points of the symmetric stable process}, 
\newblock Z. Wahrsch. Verw. Gebiete 9 (1967), 62--64. 


\bibitem{FS} P.J. Fitzsimmons, T.S. Salisbury,
\newblock {\em Capacity and energy for multiparameter Markov processes},
\newblock  Ann. Inst. H. Poincar\'e Probab. Statist. 25 (1989) 325--350.


\bibitem{H} J. Hawkes,
\newblock {\em Multiple points for symmetric L\'evy processes},
\newblock Math. Proc. Cambridge Philos. Soc. 83 (1978), 83--90.


\bibitem{He} W.J. Hendricks,
\newblock {\em Multiple points for transient symmetric L\'evy processes},
\newblock Z. Wahrsch. Verw. Gebiete 49 (1979), 13--21.


\bibitem{J} R. Jajte,
\newblock {\em Semi-stable probability measures on $\RR^N$},
\newblock Studia Math. 61 (1977), 29--39.


\bibitem{KMX} P. Kern, M.M. Meerschaert, Y. Xiao,
\newblock {\em Asymptotic behavior of semistable L\'evy exponents and applications to fractal 
path properties},
\newblock  J. Theoret. Probab. 31 (2018),  598--617.


\bibitem{K03} D. Khoshnevisan,
\newblock {\em Intersections of Brownian motions},
\newblock Expo. Math. 21 (2003),  97--114.


\bibitem{KX4} D. Khoshnevisan, Y. Xiao,
\newblock {\em Level sets of additive L\'evy processes},
\newblock  Ann. Probab. 30 (2002), 62--100.


\bibitem{KX5} D. Khoshnevisan, Y. Xiao,
\newblock {\em Weak unimodality of finite measures, and an application
to potential theory of additive L\'evy processes},
\newblock  Proc. Amer. Math. Soc., 131 (2003), 2611--2616.


\bibitem{KX1} D. Khoshnevisan, Y. Xiao,
\newblock {\em Additive L\'evy processes: capacity and Hausdorff dimension},
\newblock  In: Proc. of Inter. Conf. of Fractal Geometry and Stochastics III., Progr. Probab. 57 (2004), 62--100.


\bibitem{KX3} D. Khoshnevisan, Y. Xiao,
\newblock {\em Harmonic analysis of additive L\'evy processes},
\newblock  Probab. Theory Relat. Fields 145 (2009), 459--515.



\bibitem{LR} R.G. Laha, V.K. Rohatgi,
\newblock {\em Operator semistable probability measures on a Hilbert space},
\newblock Bull. Austral. Math. Soc. 22 (1980), 397--406.

\bibitem{LRS} J.-F. Le Gall, J.S. Rosen, N.-R. Shieh,
\newblock {\em Multiple points of L\'evy processes},
\newblock  Ann. Probab. 17 (1989), 503--515.

\bibitem{Lu} A. \L{}uczak,
\newblock {\em Operator semi-stable probability measures on $\RR^N$},
\newblock  Coll. Math. 45 (1981), 287--300.



\bibitem{LX} T. Luks, Y. Xiao,
\newblock {\em On the double points of operator stable L\'evy processes},
\newblock  J. Theoret. Probab. 30 (2017), no. 1, 297--325.



\bibitem{MaSa} M. Maejima, K. Sato,
\newblock {\em Semi-selfsimilar processes},
\newblock  J. Theoret. Probab. 12 (1999), 347--373.


\bibitem{MS} M.M. Meerschaert, H.-P. Scheffler,
\newblock Limit Distributions for Sums of Independent Random Vectors,
\newblock  John Wiley, New York, 2001.


\bibitem{MX} M.M. Meerschaert, Y. Xiao,
\newblock {\em Dimension results for sample paths of operator stable L\'evy processes},
\newblock  Stochastic Process. Appl. 115 (2005), 55--75.


\bibitem{S99} K. Sato,
\newblock L\'evy Processes and Infinitely Divisible Distributions,
\newblock Cambridge Univ. Press, Cambridge, 1999.



\bibitem{S3} N.-R. Shieh,
\newblock {\em Multiple points of dilation-stable L\'evy processes},
\newblock  Ann. Probab. 26 (1998),  1341--1355.


\bibitem{T} S.J. Taylor,
\newblock {\em Multiple points for the sample paths of the symmetric stable processes},
\newblock  Z. Wahrsch. Verw. Geb. 5 (1966),  247--264.

\bibitem{X04} Y. Xiao,
\newblock {\em Random fractals and Markov processes},
\newblock In: Fractal Geometry and Applications: A Jubilee of Benoit Mandelbrot, (Michel L. 
Lapidus and Machiel van Frankenhuijsen, editors), pp. 261--338, American Mathematical Society, 2004.

\end{thebibliography}
\end{document}